\newtheorem{theorem}{Theorem}
\newtheorem{proposition}[theorem]{Proposition}
\newtheorem{lemma}[theorem]{Lemma}
\newtheorem{corollary}[theorem]{Corollary}
\newtheorem*{switchinglemma}{Lemma \ref{lem:switching}}
\theoremstyle{definition}
\newtheorem{definition}[theorem]{Definition}
\newtheorem*{conjecture}{Conjecture}
\theoremstyle{remark}
\newtheorem*{open}{Open problem}
\newcommand{\imp}{\Rightarrow}
\newcommand{\tuple}[1]{\langle #1 \rangle}
\newcommand{\bbn}{\mathbb{N}}
\newcommand{\kk}{{\mathcal K}}
\newcommand{\n}{{\mathcal N}}
\newcommand{\m}{{\mathcal M}}
\renewcommand{\iff}{\Leftrightarrow}
\newcommand{\ba}{\bar a}
\newcommand{\bx}{\bar x}
\newcommand{\pa}{\mathrm{PA}}
\newcommand{\bba}{\mathrm{BA}}
\newcommand{\pv}{\mathrm{PV}}
\newcommand{\lin}{\mathrm{lin}}
\DeclareMathOperator{\Log}{{Log}}
\newcommand{\ph}{\mathrm{PH}}
\newcommand{\linh}{\mathrm{LinH}}
\newcommand{\eh}{\mathrm{EH}}
\newcommand{\Exp}{\mathrm{Exp}}
\newcommand{\es}[2]{\exists #1 \! < \! #2 \,}   
\newcommand{\as}[2]{\forall #1 \! < \! #2 \,}
\newcommand{\eb}[2]{\exists #1 \! \le \! #2 \,}
\newcommand{\ab}[2]{\forall #1 \! \le \! #2 \,}
\newcommand{\ido}{\mathrm{I}\Delta_0}
\newcommand{\bsi}{\mathrm{B}\Sigma_1}
\title{End-extensions of models of weak arithmetic \\ from complexity-theoretic containments }
\author{Leszek Aleksander Ko{\l}odziejczyk\footnote{Institute of Mathematics, 
University of Warsaw, Banacha 2, 02-097 Warszawa, Poland, \texttt{lak@mimuw.edu.pl}.
Partially supported by Polish National Science Centre grant no. 2013/09/B/ST1/04390.}}
\begin{document}

\maketitle

\begin{abstract} We prove that if the linear-time and polynomial-time hierarchies coincide, then every model of $\Pi_1(\bbn) + \neg \Omega_1$ has a proper end-extension to a model of $\Pi_1(\bbn)$, and so $\Pi_1(\bbn) + \neg \Omega_1 \vdash \bsi$. Under an even stronger complexity-theoretic assumption which nevertheless seems hard to disprove using present-day methods, $\Pi_1(\bbn) + \neg \Exp \vdash \bsi$. Both assumptions can be modified to versions which make it possible to replace $\Pi_1(\bbn)$ by $\ido$ as the base theory. 

We also show that any proof that $\ido + \neg \Exp$ does not prove a given finite fragment of $\bsi$ has to be ``non-relativizing'', in the sense that it will not work in the presence of an arbitrary oracle. 
\end{abstract}

The work presented below aims at a better understanding of the following notoriously hard open problem about 
moderately weak theories of arithmetic:
\begin{equation}\label{eqn:collection}
\tag{$\star$} \textrm{Does }\mathrm{I}\Delta_0 + \neg \Exp \textrm{ prove }\mathrm{B}\Sigma_1?
\end{equation}
Here $\ido$ is induction for bounded formulas in the language of ordered rings, 
$\Exp$ is the axiom $\forall x\, \exists y\, (y =2^x)$ with $y = 2^x$ expressed by an appropriate $\Delta_0$ formula,
and $\bsi$ is the $\Sigma_1$ collection scheme,
\[\ab{x}{u} \exists y\, \varphi(x,y) \imp \exists w\, \ab{x}{u} \eb{y}{w} \varphi(x,y),\] 
where $\varphi$ is $\Delta_0$ and may contain parameters. 

It is well-known and easy to show that $\ido \not \vdash \bsi$ \cite{parsons:choice},
but all known proofs of this (e.g. \cite{parsons:choice, pk:collection, adamowicz:instances, beklemishev:collection}) 
use objects of at least exponential size, often in the form of a definition of satisfaction for $\Sigma_1$ formulas.

Problem \eqref{eqn:collection} was first explicitly stated in \cite{wp:endextensions}. The expected answer is negative, and there have been a number
of results of the form ``the answer to \eqref{eqn:collection} is negative under some complexity-theoretic assumptions'' \cite{wp:endextensions, ferreira:tally, akp:truthdef}. However, even though the assumptions used to obtain a negative answer have been varied in spirit and in some cases mutually contradictory, an unconditional negative answer remains elusive.

Below, we try to explain part of the difficulty with \eqref{eqn:collection} by pointing out a complexity-theoretic assumption
which implies that the answer to the question ``does $\neg \Exp$ prove $\bsi$?'' is actually \emph{positive}, over $\Pi_1$ truth if the assumption is true and over plain $\ido$ if the assumption is provable in $\ido$ (which has to be stated in a special way). The assumption is not at all likely to be true; what seems much more likely, however, is that disproving it might be beyond the reach of present-day methods of complexity theory. 

Earlier work had already revealed the possibility of complexity-theoretic assumptions implying the provability of fragments of $\bsi$ from $\neg \Exp$, but those fragments have been rather weak. Ferreira \cite{ferreira:endextensions} showed that some extremely restricted instances of $\Sigma_1$ collection are actually unconditionally provable from $\neg \Exp$; his work is related to the correspondence between the relativized linear-time hierarchy and polynomial-size bounded-depth circuits. Recently, 
Cord\'on Franco \emph{et al.} \cite{cfl:provably-delta} came up with a computational condition exactly equivalent to the provability of \emph{parameter-free} $\Sigma_1$ collection in $\Pi_1(\bbn) + \neg\exp$. Their condition concerns the complexity of search problems, and it is not obvious how it relates to the well-studied complexity hierarchies of decision problems.

Our assumption is just one among many statements of the general form ``(a level of) an apparently larger time hierarchy is contained in (a higher level of) an apparently smaller time hierarchy'', which tend to imply the provability of $\bsi$ from an axiom negating the totality of a suitable function $f$. Before discussing $f(x) = 2^x$, we first consider the simpler case of $f(x)$ equal to $\omega_1(x) = 2^{\log^2x}$. 

The connection between assumptions of this sort and $\Sigma_1$ collection involves end-extensions of models. A few specific  constructions used to prove containments between computational complexity classes have also led to end-extension theorems \cite{ferreira:endextensions, zambella:endextensions}. We observe that essentially any containment between levels of typical time hierarchies defined by different time bounds implies the existence of end-extensions for models of corresponding theories. Our focus is on the case where the smaller time bounds are linear, so the corresponding theory is $\ido$. As is well-known, any model of $\ido$ with a proper end-extension has to satisfy $\bsi$.

We also apply the technique of building end-extensions from complexity-theoretic containments to show a result about theories in a language expanded by a new ``oracle'' predicate. Roughly speaking, our theorem says that for every finite fragment of $\bsi$, a proof that $\neg \Exp$ does not imply the fragment has to be ``non-relativizing'', in the sense that it will not work in the presence of an arbitrary oracle. To show this, we go through a lemma in computational complexity: for every $k$, there exists an oracle relative to which the $k$-th level of the exponential-time hierarchy is contained in the linear-time hierarchy.

The paper is structured as follows. We review the necessary definitions and background in Section 1. We prove a very simple version of our ``end-extensions from containments'' result in Section 2 and the version about $\neg \Exp$ in Section 3. The result about relativization is proved across Sections 4 and 5. The short Section 6 contains some concluding remarks.
 
\section{Preliminaries}

We assume that the reader is well-acquainted with weak theories of arithmetic as described for instance in \cite{kra95}. In particular, we assume familiarity with the theory $\ido$, the axioms $\Omega_1$ and $\Exp$, and with Buss' formula classes $\Sigma^b_k$ and theories $T_2$, $T^k_2$, $n \in \bbn$. $L_{\pa}$ is the language of Peano Arithmetic, or in other words the language 
of ordered rings. $L_{\bba}$ is Buss' language for bounded arithmetic. The class of bounded formulas in $L_{\bba}$ is denoted $\Sigma^b_\infty$; the symbol $\Delta_0$ always stands for the bounded formulas of $L_{\pa}$ only. 

Recall from \cite{kpt} that $\pv_{k+1}$ functions are canonically defined symbols for polynomial-time functions with a $\Sigma^p_k$ oracle, where $\Sigma^p_k$ is the $k$-th level of the polynomial-time hierarchy $\ph$.
The set of $\pv_{k+1}$ functions contains a symbol for the characteristic function of every $\Sigma^p_k$ relation and is closed under operations corresponding to composition and bounded recursion on notation. $\pv_{k+1}$ also stands for a theory
in the language with all $\pv_{k+1}$ functions, axiomatized by the defining axioms for the function symbols and induction for all quantifier-free formulas. $\pv_{k+1}$ is an extension by definitions of $T^k_2$ (with some subtleties for $k=0$, cf.\ \cite{jerabek:sharply, bk:sharply}), and can be identified with $T^k_2$ for all intents and purposes. 

Most of the computational complexity theory we use is very old-school, and can be found for instance in \cite{bdg95, bdg90}. Recall that a function $f$ is \emph{time-constructible} if there is a Turing machine which on inputs of length $n$ runs for exactly $f(n)$ steps. Essentially, a time-constructible function is one which makes sense as a time bound. For $k \ge 1$, a \emph{$\Sigma_k$ machine} is one that begins its computation in an existential (nondeterministic) state, and switches between existential and universal (co-nondeterministic) states at most $k-1$ times. Thus, for instance, $\Sigma^p_k$ is $\Sigma_k$-$\mathrm{TIME}(n^{O(1)})$. The \emph{linear-time hierarchy} $\linh$ is $\bigcup_k \Sigma_k$-$\mathrm{TIME}(O(n))$. A relation on the natural numbers is in $\linh$ exactly if it can be defined by a $\Delta_0$ formula \cite{wrathall}.

A \emph{decision tree} is a finite binary tree $T$ with leaves labelled by 0 or 1, with internal nodes labelled by boolean variables $x_i$, and the two outgoing edges from an internal node labelled by 0 and by 1. We also assume that on any single branch of $T$ each variable appears as a label at most once. The height of $T$ is the length of the longest branch in $T$.
Any boolean assignment $\alpha$ determines a branch through $T$: from a node labelled by $x_i$, the branch takes the edge labelled by $\alpha(x_i)$. A decision tree $T$ with internal nodes labelled by variables from $\{x_1,\ldots,x_n\}$ \emph{computes} (or \emph{decides}) a boolean function $f$ on $\{0,1\}^n$ if for every $\tuple{\alpha(x_1),\ldots,\alpha(x_n)} \in \{0,1\}^n$,
$f(\alpha)$ equals the label of the leaf on the branch determined by $\alpha$.

\section{Simplest case}

\begin{theorem}\label{thm:simple1}
If $\ph = \linh$, then every model $\m \models \Pi_1(\bbn)$ has an end-extension to $\kk \models T_2$.
\end{theorem}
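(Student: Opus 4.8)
The plan is to build the end-extension $\kk$ as a structure whose elements are coded by ``small'' strings or numbers in $\m$, using the hypothesis $\ph = \linh$ to guarantee that the diagram of $\kk$ — which a priori involves quantifiers over all of $\m$ — can actually be captured by $\Delta_0$ (equivalently $\linh$) information inside $\m$. Concretely, since $\m \models \Pi_1(\bbn)$, every element $a \in \m$ lies below some $\bbn$-standard power tower applied to the parameters, but more to the point we will represent prospective elements of $\kk$ by $\m$-elements under a pairing that intends $\langle n, b \rangle$ to denote (an approximation to) a value of polynomial growth rate $b^n$ in a "new" variable; the universe of $\kk$ should be something like the $\Sigma^b_\infty$-definable closure of $\m \cup \{t\}$ for a fresh element $t$ with $t$ larger than everything in $\m$ but with $t^{O(1)}$ still "available". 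The key point is that $\kk$ must model $T_2$, i.e.\ $\Sigma^b_1$-induction (or equivalently polynomial induction for $\Sigma^b_k$ for all $k$, via $\pv_{k+1}$), and $\kk$ must end-extend $\m$.

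First I would set up the coding: fix $\m \models \Pi_1(\bbn)$, and describe the domain of $\kk$ as equivalence classes of pairs $\langle \sigma, a\rangle$ where $a \in \m$ and $\sigma$ is a ($\bbn$-standard length) description of a $\pv_{k+1}$-term, the intended value being that term evaluated at $a$ and at the new "top" element. Second, I would define the atomic relations $+,\cdot,<,=$ on these codes: each such relation between $\langle \sigma, a\rangle$ and $\langle \tau, b\rangle$ reduces, by evaluating the terms, to a question about the behavior of $\pv$-functions on $\m$-inputs as the fresh variable grows — which is exactly a statement of the form "a certain $\ph$ predicate holds of $a,b$", and here the hypothesis $\ph = \linh$ lets me replace that by a $\Delta_0$ (hence, by $\Pi_1(\bbn)$, meaningful in $\m$) predicate, so $\m$ itself decides the atomic diagram of $\kk$. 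Third, I would verify that $\kk$ is a model of the base algebraic axioms and that the map $a \mapsto \langle x, a\rangle$ is an end-embedding of $\m$ into $\kk$ (nothing new is put below an old element because the new terms all have value $\ge$ their $\m$-arguments). Fourth — the substantive step — I would check $\Sigma^b_1$-induction in $\kk$: given a $\Sigma^b_1$ formula with a counterexample, minimality of the witness is again a $\ph$-style condition on $\m$-data, and $\Delta_0$-induction in $\m$ (available since $\m \models \Pi_1(\bbn) \supseteq \ido$) closes the argument, iterating the same translation through the levels $k$ to get all of $T_2$.

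The main obstacle I expect is the fourth step done uniformly in $k$: translating "$\kk \models \Sigma^b_k\text{-PIND}$" into something $\m$ can verify requires that the collapse $\ph = \linh$ be used not just as a statement about $\bbn$ but in a form robust enough to evaluate $\pv_{k+1}$-terms on nonstandard $\m$-inputs with a nonstandard "new" argument, and to do so coherently across all finite $k$ at once; one has to be careful that the $\Delta_0$ formula provided by $\ph=\linh$ for a given $\Sigma^p_k$ predicate really defines the right relation in $\m$ and not merely in $\bbn$ — this is where $\Pi_1(\bbn)$, rather than just $\ido$, is doing real work, since $\Pi_1$ truth transfers the needed equivalences up to $\m$. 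A secondary technical point is handling $k=0$ / the $\pv_1$ base case with the usual caveats noted in the preliminaries, and making sure the pairing and term-coding are set up so that composition and bounded recursion on notation are represented faithfully, so that the domain of $\kk$ is genuinely closed under all $\pv_{k+1}$ operations.
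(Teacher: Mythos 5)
There is a genuine gap here, and it is located exactly where you flag your ``main obstacle'': nothing in your construction actually determines the diagram of $\kk$ or yields $\kk \models T_2$. The paper's proof does not build a term model inside $\m$ at all and does not add a fresh top element. It uses $\m \models \Pi_1(\bbn)$ once, to get a $\Delta_0$-elementary extension $\n \models Th(\bbn)$ in the language $L_{\bba}$ (possible because every $\Sigma_1$ fact about $\m$ is true in $\bbn$), and then takes $\kk$ to be the closure of $\m$ inside $\n$ under least-witness Skolem functions for $\Sigma^b_\infty$ formulas. With that choice, $\kk \preccurlyeq_{\Sigma^b_\infty} \n$ and hence $\kk \models T_2$ come for free from Tarski--Vaught, and the hypothesis $\ph = \linh$ is used only at one point: to show $\kk \supseteq_e \m$, by translating ``$g(\ba) \le b$ has a witness'' into a $\Delta_0$ formula, pulling it down to $\m$ by $\Delta_0$-elementarity, and pushing the witness back up. No properness is needed or claimed in Theorem \ref{thm:simple1}; properness only enters later via $\neg\Omega_1$.

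Your plan fails on two concrete points. First, the fresh element $t$ ``larger than everything in $\m$'' has no description in terms of $\m$-data, so there is no $\ph$ predicate of $\m$-elements whose $\linh$ translation could decide statements like $\sigma(t,a) < \tau(t,b)$; the device of a definable new element only works in the later Theorem \ref{thm:full1}, where the added element is $2^{f(|d|)}$ for $d \in \m$ and $f$ of controlled growth, so that $\psi(x, 2^{f(|d|)})$ is again a property of tuples from $\m$. (And for Theorem \ref{thm:simple1} no new element is wanted anyway.) Second, your appeal to ``$\Pi_1$ truth transfers the needed equivalences up to $\m$'' does not work: the equivalences $\forall x\,(\psi(x) \iff \psi^{\lin}(x))$ are sentences of $L_{\bba}$, not $\Pi_1$ sentences of $L_{\pa}$, and any attempt to rewrite $\psi$ in $L_{\pa}$ requires values of the smash function, which need not exist in $\m$ (indeed the intended application has $\m \models \neg\Omega_1$). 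So the $\Delta_0$ formulas supplied by $\ph = \linh$ are only known to define the right relations in $\bbn$, and your fourth step --- verifying bounded induction for the structure whose diagram they are supposed to define --- is exactly the part that cannot be carried out this way. The paper's detour through the true-arithmetic model $\n$ is what makes both the coherence of the diagram and the verification of $T_2$ automatic; some such semantic step (or a conservativity hypothesis as in Theorem \ref{thm:simple2}) is needed, and your proposal has no substitute for it.
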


\begin{proof}

Assume that $\ph = \linh$. This implies that for every $\Sigma^b_\infty$ formula $\psi(x)$ there exists a $\Delta_0$ formula $\psi^\lin(x)$ such that $\psi$ and $\psi^\lin$ are equivalent in the standard model of arithmetic.

Let $\m \models \Pi_1(\bbn)$. $\m$ has a $\Delta_0$-elementary extension to a model $\n$ satisfying the true theory of the natural numbers in $L_{\bba}$. Define $\kk$ to be the closure of $\m$ in $\n$ under Skolem functions for $\Sigma^b_\infty$ formulas, where the Skolem function for $\eb{y}{t}\varphi(\bx,y)$ is the ($\Sigma^b_\infty$-definable) function which outputs the smallest $y \le t$ such that $\varphi(\bx,y)$, or $0$ if there is no such $y$.

Clearly, $\kk$ is a $\Sigma^b_\infty$-elementary substructure of $\m$ and a model of $T_2$. 
We need to show that $\kk \supseteq_e \m$.

Let $g$ be the Skolem function for a $\Sigma^b_\infty$ formula  $\eb{y}{t}\varphi(\bx,y)$ 
and  assume that for $\ba, b \in\m$, we have $g^\n(\ba) \leq b$. Then 
$\n \models \eb{y}{b} (y = g(\ba))$, and thus also 
\[ \n \models \eb{y}{b}(y=g(\bx))^\lin[\ba/\bx].\]
By $\Delta_0$-elementarity, the  same formula holds in $\m$, so there is $c \in \m$ such that 
$\m \models (y=g(\bx))^\lin[\ba/\bx, c/y]$. Thus $\n \models (y=g(\bx))^\lin[\ba/\bx, c/y]$
and $\n \models c = g(\ba)$, which means that  $g^\n(\ba) \in \m$. 
\end{proof}

If we want to end-extend an arbitrary model of $\ido$, rather than just a model of $\Pi_1(\bbn)$, to a structure satisfying $T_2$, we need a stronger assumption.

\begin{theorem}\label{thm:simple2}
Assume that there is a translation $\psi \mapsto \psi^\lin$ of $\Sigma^b_\infty$ into $\Delta_0$ formulas such that 
$T_2 + \{ \forall x \, (\psi(x) \iff \psi^\lin(x)) : \psi \in \Sigma^b_\infty\}$ is $\Pi_1$-conservative over $\ido$. Then every model $\m \models \ido$ has an end-extension to $\kk \models T_2$.
\end{theorem}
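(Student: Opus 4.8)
The plan is to imitate the proof of Theorem~\ref{thm:simple1}, with a single substitution: the model $\n$ satisfying true arithmetic in $L_{\bba}$, which was available there only because $\m\models\Pi_1(\bbn)$, is replaced by a $\Delta_0$-elementary extension of $\m$ that satisfies the theory $T:=T_2+\{\forall\bx\,(\psi(\bx)\iff\psi^\lin(\bx)):\psi\in\Sigma^b_\infty\}$ from the hypothesis. Producing such an extension is exactly what $\Pi_1$-conservativity buys us. Fix $\m\models\ido$ and let $D$ be the $\Delta_0$-diagram of $\m$: the set of all $\Delta_0$ sentences of $L_{\pa}$, expanded with a constant $\ba$ for each $a\in\m$, that hold in $\m$. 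I would first check that $T+D$ is consistent. By compactness it suffices to treat a single $\Delta_0$ sentence $\theta(\ba)\in D$, a finite subset of $D$ being equivalent to one such sentence. If $T+\theta(\ba)$ were inconsistent, then $T\vdash\forall\bx\,\neg\theta(\bx)$, which is a $\Pi_1$ sentence of $L_{\pa}$, so by $\Pi_1$-conservativity $\ido\vdash\forall\bx\,\neg\theta(\bx)$, contradicting $\m\models\ido+\theta(\ba)$. Let $\n\models T+D$. Since $D$ is complete for $\Delta_0$ sentences over the constants $\ba$, the map $a\mapsto\ba^\n$ is a $\Delta_0$-elementary embedding of $\m$ into the $L_{\pa}$-reduct of $\n$; identifying $\m$ with its image, we have $\m\prec_{\Delta_0}\n$, $\n\models T_2$, and $\n\models\forall\bx\,(\psi(\bx)\iff\psi^\lin(\bx))$ for every $\psi\in\Sigma^b_\infty$.

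From this point on the proof is that of Theorem~\ref{thm:simple1}. Let $\kk$ be the closure of $\m$ in $\n$ under the Skolem functions for $\Sigma^b_\infty$ formulas (each interpreted in $\n$ as there). Then $\kk$ is closed under the operations of $L_{\bba}$ (which are among the Skolem functions), is a $\Sigma^b_\infty$-elementary substructure of $\n$, and therefore satisfies $T_2$. To see $\kk\supseteq_e\m$, recall that the $\Sigma^b_\infty$-definable functions of $T_2$ are provably closed under composition, so any element of $\kk$ may be written $g^\n(\ba)$ with $\ba\in\m$ and $g$ the Skolem function of some $\Sigma^b_\infty$ formula. Suppose $g^\n(\ba)\le b$ with $b\in\m$. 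Then $\n\models\es{y}{b}(y=g(\bx))[\ba/\bx]$, so, applying the equivalence $\psi\iff\psi^\lin$ available in $\n$,
\[\n\models\es{y}{b}(y=g(\bx))^\lin[\ba/\bx].\]
As $\es{y}{z}(y=g(\bx))^\lin$ is $\Delta_0$, $\Delta_0$-elementarity gives $c\in\m$ with $c\le b$ and $\m\models(y=g(\bx))^\lin[\ba/\bx,c/y]$; transferring this back to $\n$ and using the equivalence once more yields $\n\models c=g(\ba)$, i.e.\ $g^\n(\ba)=c\in\m$.

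The only part not already contained in the proof of Theorem~\ref{thm:simple1} is the passage from the conservativity hypothesis to the extension $\n\models T$, so that is where I expect whatever (modest) work there is to lie. The two points to watch are that $\forall\bx\,\neg\theta(\bx)$ is honestly $\Pi_1$ even when $\bx$ is a tuple of variables, so that conservativity applies, and that, since $\n$ is an $L_{\bba}$-structure while $\m$ is an $L_{\pa}$-structure, the comparison must be made with the $L_{\pa}$-reduct of $\n$. Everything downstream is the earlier argument with the phrase ``$\n$ is the standard model'' replaced by ``$\n\models T$'' wherever the translation equivalences are invoked.
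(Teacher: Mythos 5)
Your proposal is correct and follows essentially the same route as the paper: the paper's proof likewise obtains a $\Delta_0$-elementary extension $\n\models T_2$ satisfying all the equivalences $\forall x\,(\psi\iff\psi^\lin)$ from the $\Pi_1$-conservativity hypothesis and then repeats the Skolem-closure argument of Theorem~\ref{thm:simple1} verbatim. The compactness/diagram argument you spell out is exactly the standard content of the paper's one-line appeal to conservativity, so there is no substantive difference.
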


\begin{proof}
Consider $\m \models \ido$. Under the assumption of the theorem, $\m$ has a $\Delta_0$-elementary extension to a model $\n$ satisfying $T_2$ in which each $\Sigma^b_\infty$ formula $\psi$ is equivalent to $\psi^\lin$. Define $\kk$ to be the closure of $\m$ in $\n$ under Skolem functions for $\Sigma^b_\infty$ formulas and repeat the remainder of the proof of Theorem \ref{thm:simple1}.
\end{proof}

It can be argued that the assumption of Theorem \ref{thm:simple2} is a reasonable way of making precise 
the statement ``$\ido \vdash \ph = \linh$''. 
In essence, what the assumption says is that a model of $\ido$
can be extended to another structure in which the $\linh$ properties are preserved, 
the $\ph$ properties are reasonably well-behaved (in other words, $T_2$ is satisfied) and the $\ph$ properties are in fact in $\linh$. Our use of this assumption gives rise to the following open problem:
\begin{open}
Is it true that if $T_2$ is $\Pi_1$-conservative over $\ido$, then in fact the assumption of Theorem \ref{thm:simple2} holds?
\end{open}

Note that one other possible formulation of ``$\ido \vdash \ph \subseteq \linh$'', namely that provably in $\ido$, if $x \# x, x \# x \# x, \ldots$ exist, then each $\Sigma^b_\infty$ formula $\psi(x)$ has a $\Delta_0$ translation $\psi^\lin(x)$, is unreasonably weak. In fact, it is \emph{weaker} than
$T_2 \vdash \ph = \linh$.

\begin{corollary}\label{cor:simple-coll}
If $\ph = \linh$, then $\Pi_1(\bbn) + \neg \Omega_1 \vdash \bsi$. If the assumption of Theorem \ref{thm:simple2} holds, then $\ido + \neg \Omega_1 \vdash \bsi$.
\end{corollary}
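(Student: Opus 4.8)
The plan is to derive Corollary~\ref{cor:simple-coll} from Theorems~\ref{thm:simple1} and~\ref{thm:simple2} by exploiting the classical fact that a model of $\ido$ with a proper end-extension (to another model of $\ido$, or even just of $\bsi$) must itself satisfy $\bsi$. So the heart of the matter is to produce, under each hypothesis, a \emph{proper} end-extension of a given model of $\Pi_1(\bbn) + \neg\Omega_1$ (respectively $\ido + \neg\Omega_1$).

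First I would take $\m \models \Pi_1(\bbn) + \neg\Omega_1$ (or $\m \models \ido + \neg\Omega_1$ under the second hypothesis) and apply Theorem~\ref{thm:simple1} (resp.\ Theorem~\ref{thm:simple2}) to obtain an end-extension $\kk \models T_2$ with $\kk \supseteq_e \m$. The key point is that this end-extension is \emph{proper}: since $\m \models \neg\Omega_1$, there is some $a \in \m$ such that $\omega_1(a) = 2^{\log^2 a}$ does not exist in $\m$; but $\kk \models T_2$ and hence $\kk \models \Omega_1$, so $\omega_1(a)$ does exist in $\kk$, and since $\kk$ end-extends $\m$ and $\omega_1(a)$ would be bounded by any element of $\m$ above it --- wait, more carefully: $\omega_1(a)^\kk$ is an element of $\kk$; if it were in $\m$ it would witness $\omega_1(a)$ in $\m$ (as $\kk$ is $\Sigma^b_\infty$-elementary over $\m$, the graph of $\omega_1$ is absolute), contradiction. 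Hence $\omega_1(a)^\kk \in \kk \setminus \m$, so $\kk \supsetneq_e \m$ properly.

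Next, from $\kk \supsetneq_e \m$ properly and $\kk \models T_2 \supseteq \ido$, the standard argument gives $\m \models \bsi$: given an instance $\ab{x}{u}\exists y\,\varphi(x,y)$ true in $\m$ with $\varphi \in \Delta_0$ and parameters from $\m$, pick any $d \in \kk \setminus \m$ lying above all of $\m$ below it --- concretely, use $d = \omega_1(a)^\kk$, which exceeds every element of $\m$ that it dominates, and in particular can be taken above $u$ by choosing $a$ large enough, or simply note $\kk$ is a proper end-extension so $\kk \setminus \m$ is upward closed and every element of $\m$ is below every element of $\kk\setminus\m$. Then in $\kk$, using $\Delta_0$-induction (bounded minimization) one finds $w \le$ (something below $d$) such that $\ab{x}{u}\eb{y}{w}\varphi(x,y)$; more precisely, for each $x < u$ the least witness $y_x$ exists in $\kk$ and is actually in $\m$ by end-extension (since $\varphi$ is absolute and $x, y_x$ would be definable in $\m$... one has to be slightly careful here and instead run the argument so that $w$ is obtained as $\max_{x<u} y_x$, computed in $\kk$, which lies below $d$ and is in $\m$ because $\m \models_e \kk$ means elements of $\kk$ below elements of $\m$ are in $\m$ --- but $w$ need not be below an element of $\m$ \emph{a priori}). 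The clean formulation: in $\kk$, the set $\{y : \ab{x}{u}\eb{y}{y}\varphi(x,y)\}$... I would instead just cite the well-known lemma (attributed to the folklore around Parsons/Paris--Kirby) that if $\m \subseteq_e \kk \models \ido$ properly then $\m \models \bsi$, which is exactly the fact the authors invoke in the introduction with the phrase ``any model of $\ido$ with a proper end-extension has to satisfy $\bsi$''.

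\textbf{The main obstacle} is not any single deep step but rather the bookkeeping in the second clause: showing that the end-extension produced by Theorem~\ref{thm:simple2} from a model of $\ido + \neg\Omega_1$ is genuinely proper, which again reduces to the observation that $\kk \models \Omega_1$ while $\m \not\models \Omega_1$, together with the absoluteness of the graph of $\omega_1$ across the $\Sigma^b_\infty$-elementary (in particular $\Delta_0$-elementary) pair $\m \subseteq_e \kk$. Once properness is in hand, the passage to $\bsi$ is entirely standard and I would dispatch it by reference to the cited fact rather than reproving it.
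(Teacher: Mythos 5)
Your proposal is correct and follows essentially the same route as the paper: the end-extension from Theorem \ref{thm:simple1} (resp.\ \ref{thm:simple2}) is necessarily proper because $\kk \models T_2$ implies $\kk \models \Omega_1$ while $\m \models \neg\Omega_1$, and then the standard Paris--Kirby fact that a model of $\ido$ with a proper end-extension to a model of $\ido$ satisfies $\bsi$ finishes the argument. The only difference is that you spell out the properness witness $\omega_1(a)^\kk \notin \m$ and (after some wavering) rightly fall back on citing the classical end-extension-implies-collection lemma rather than reproving it, exactly as the paper does.
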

\begin{proof}
If $\m \models \Pi_1(\bbn) + \neg \Omega_1$, then the end-extension given by Theorem \ref{thm:simple1} is of necessity proper, so $\m \models \bsi$. Similarly for $\m \models \ido + \neg \Omega_1$ and Theorem \ref{thm:simple2}.
\end{proof}

\section{The case of $\neg \Exp$}

We now consider the task of formulating an assumption which would imply provability of $\bsi$ from $\neg \Exp$ rather than just from $\neg \Omega_1$. As previously, we first work with $\Pi_1(\bbn)$ as our base theory.

The natural idea would be to replace the polynomial time hierarchy in the statement of Theorem \ref{thm:simple1} by the smaller of the two well-known ``exponential time hierarchies'', which is defined by letting $\Sigma^\mathrm{e}_{k}$ be $\Sigma_k$-$\mathrm{TIME}(2^{O(n)})$ and $\eh$ be $\bigcup_{k \in \bbn}\Sigma^\mathrm{e}_k$. By an argument similar to the proof of Theorem \ref{thm:simple1}, we could show that if $\eh = \linh$, then every $\m \models \Pi_1(\bbn)$ has an end-extension to $\kk \models  T_2$ such that $\m = \Log \kk$, where $\Log \kk$ equals $\{a \in \kk: 2^a \textrm{ exists in } \kk\}$. If $\m \models \neg \Exp$, the extension is proper, so $\m \models \bsi$.

Unfortunately, the assumption $\eh = \linh$ is too strong for our purposes, because it is known to be false. The following proposition is apparently folklore.

\begin{proposition}\label{prop:eh}
$\eh \neq \linh$.
\end{proposition}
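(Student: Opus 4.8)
The plan is to separate $\eh$ from $\linh$ by a direct diagonalization, using the fact that $\linh$ admits complete problems at a fixed level of quantifier alternation while $\eh$ does not. Concretely, recall from Wrathall's theorem that a relation on $\bbn$ lies in $\linh$ precisely when it is $\Delta_0$-definable; since there is a universal $\Delta_0$ formula for each bounded quantifier-complexity, one should not expect a single $\Delta_0$ formula to capture all of $\linh$, but one does get that $\linh \subseteq \ido$-definable sets, and more importantly each set in $\linh$ is decidable in $\Sigma_k\text{-}\mathrm{TIME}(O(n))$ for some fixed $k$. The key asymmetry I would exploit is a time hierarchy theorem: there is a language in, say, $\mathrm{DTIME}(2^{2n})$ (hence in $\Sigma^\mathrm{e}_1 \subseteq \eh$) that is not in $\Sigma_k\text{-}\mathrm{TIME}(O(n))$ for any fixed $k$, because a linear-time $\Sigma_k$ machine, when we pad or pair its alternation structure away, can be simulated in $\mathrm{DTIME}(2^{o(n)})$ after guessing the polynomially-many (in fact linearly-many) bits of the certificates — wait, that last simulation is the subtle point and I address it below.

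First I would fix, for each $k$, an enumeration of $\Sigma_k$-machines running in time $cn+c$ for all constants $c$, and argue that the union over all $k$ and all $c$ of the languages decided by these machines — i.e. $\linh$ — is contained in $\bigcup_k \mathrm{DTIME}(2^{kn})$: given a $\Sigma_k$ linear-time machine on an input of length $n$, the existential and universal certificates it reads have total length $O(n)$, so exhaustively searching all $2^{O(n)}$ certificate-tuples and deterministically checking each costs $2^{O(n)}$ time, which is $\mathrm{DTIME}(2^{k'n})$ for a suitable $k'$ depending on the machine. Second, I would invoke the deterministic time hierarchy theorem to produce a language $L \in \mathrm{DTIME}(2^{2^n})$ (comfortably inside $\Sigma^\mathrm{e}_1$, since $2^{2^n}$ is dominated by $2^{2^{O(n)}}$... here I must be careful that $2^{2^n}$ is still $2^{O(n)}$-time? it is not) — so instead I would pick $L \in \mathrm{DTIME}(2^{3n}) \setminus \bigcup_k \mathrm{DTIME}(2^{kn})$, which the hierarchy theorem does not directly give since $3n$ is itself linear; the correct move is to take $L \in \mathrm{DTIME}(2^{n^2}) \setminus \mathrm{DTIME}(2^{o(n^2)})$, note $2^{n^2} \le 2^{2^{O(n)}}$ so $L \in \mathrm{DTIME}(2^{2^{O(n)}}) \subseteq \eh$ (even $\subseteq \Sigma^\mathrm{e}_1$ if one is slightly more careful, or at worst a low level of $\eh$), and observe $L \notin \bigcup_k \mathrm{DTIME}(2^{kn}) \supseteq \linh$. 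Finally, combining the inclusion $\linh \subseteq \bigcup_k \mathrm{DTIME}(2^{kn})$ with $L \in \eh \setminus \bigcup_k \mathrm{DTIME}(2^{kn})$ yields $L \in \eh \setminus \linh$, so $\eh \ne \linh$.

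The main obstacle is getting the inclusion $\linh \subseteq \bigcup_k \mathrm{DTIME}(2^{kn})$ exactly right: one has to check that a $\Sigma_k$ machine running in time $an+b$ really only consults $O(n)$ bits of nondeterministic guesses (true, since it makes at most $an+b$ steps total, hence reads at most that many guess bits), so the brute-force over all guess strings is $2^{O(n)} \cdot \mathrm{poly}$, landing in $\mathrm{DTIME}(2^{k'n})$. The secondary obstacle is pinning down which level of $\eh$ the diagonal language lives in and making sure the time-constructibility hypotheses of the hierarchy theorem are met for the bound $2^{n^2}$ (it is time-constructible), so that the strict separation $\mathrm{DTIME}(2^{n^2}) \nsubseteq \mathrm{DTIME}(2^{o(n^2)})$, and a fortiori $\nsubseteq \bigcup_k \mathrm{DTIME}(2^{kn})$, genuinely holds. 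Everything else is bookkeeping about padding and the standard pairing of machine indices with their running-time constants.
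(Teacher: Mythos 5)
There is a genuine gap, and it sits exactly at the point your argument needs most: membership of the diagonal language in $\eh$. In this paper $\Sigma^\mathrm{e}_k$ is $\Sigma_k$-$\mathrm{TIME}(2^{O(n)})$ --- the \emph{linear}-exponent hierarchy --- so every level of $\eh$ has time bound $2^{O(n)}$, not $2^{n^{O(1)}}$ and certainly not $2^{2^{O(n)}}$. Your language $L \in \mathrm{DTIME}(2^{n^2}) \setminus \mathrm{DTIME}(2^{o(n^2)})$ does lie outside $\bigcup_k \mathrm{DTIME}(2^{kn}) \supseteq \linh$, but the step ``$2^{n^2} \le 2^{2^{O(n)}}$, so $L \in \mathrm{DTIME}(2^{2^{O(n)}}) \subseteq \eh$'' is false: $\mathrm{DTIME}(2^{2^{O(n)}})$ is doubly exponential time, and the known containments go the other way ($\eh$ sits inside linear-exponential space, hence inside doubly exponential time); no known result places $\mathrm{DTIME}(2^{n^2})$, let alone $\mathrm{DTIME}(2^{2^{O(n)}})$, inside any level of $\eh$ (already $\mathrm{DTIME}(2^{n^2}) \subseteq \Sigma^\mathrm{e}_1 = \mathrm{NTIME}(2^{O(n)})$ is open). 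So your diagonal language is never shown to be in $\eh$, and the separation does not follow. The difficulty is structural: both $\linh$ and $\mathrm{E}=\mathrm{DTIME}(2^{O(n)})$ (and each level of $\eh$) are unions over unbounded constants, so a single application of a time hierarchy theorem with a fixed super-linear bound cannot simultaneously escape all of $\linh$ and stay inside $\eh$ --- which is precisely the obstacle you flagged at the start and then did not overcome.

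The paper circumvents this with a dichotomy rather than one diagonalization. If $\linh$ collapses to some fixed level $\Sigma_k$-$\mathrm{TIME}(O(n))$, then the \emph{nondeterministic} time hierarchy theorem gives $\Sigma_k$-$\mathrm{TIME}(O(n)) \subsetneq \Sigma^\mathrm{e}_k$, so $\linh \neq \eh$. If $\linh$ does not collapse, it cannot equal $\mathrm{E}$, because $\mathrm{E}$ has a problem complete under linear-time reductions and each level of $\linh$ is closed under such reductions (equality would force a collapse to the level containing the complete problem); since $\linh \subseteq \mathrm{E} \subseteq \Sigma^\mathrm{e}_1$, again $\linh \neq \eh$. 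If you want to salvage your write-up, this case analysis (or some equivalent device, e.g.\ padding plus a complete problem) is the missing ingredient; your observation that $\linh \subseteq \bigcup_k \mathrm{DTIME}(2^{kn})$ is correct but by itself only yields $\linh \subseteq \mathrm{E}$, not a strict separation from $\eh$.
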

\begin{proof}
For each $k \in \bbn$, $\Sigma_k$-$\mathrm{TIME}(O(n)) \subsetneq \Sigma^\mathrm{e}_{k}$ by the nondeterministic time hierarchy theorem. So,
if $\linh = \Sigma_k$-$\mathrm{TIME}(O(n))$, then $\linh \subsetneq \Sigma^\mathrm{e}_{k}$. 

On the other hand, $\mathrm{E} = \mathrm{DTIME}(2^{O(n)})$ contains $\linh$ and has a problem which is complete via linear-time reductions. Thus, if $\linh$ does not collapse to a finite level,  we have $\linh \subsetneq \mathrm{E} \subseteq \Sigma^\mathrm{e}_{1}$. 
\end{proof}

Proposition \ref{prop:eh} means that we have to consider time bounds whose growth rate is ``fractional-exponential'':
\begin{definition} The function $f \colon \bbn \to \bbn$, $f(n) = O(2^n)$, has \emph{fractional-exponential growth rate} if there exists $\ell \in \bbn$ such that \[\underbrace{f \circ f \circ \ldots \circ f}_{\ell \textrm{ times}} \ge 2^n.\]
\end{definition}
Functions of fractional-exponential growth rate appear naturally in some contexts in computational complexity theory \cite{mvw:cocoon}. There are time-constructible functions of arbitrarily slow fractional-exponential growth:

\begin{lemma}
For every $\ell \in \bbn$, there exists a non-decreasing time-constructible function such that  
\[\underbrace{f \circ f \circ \ldots \circ f}_{\ell \textrm{ times}} = o(2^n),\]
but for some $m > \ell$,
\[\underbrace{f \circ f \circ \ldots \circ f}_{m \textrm{ times}} \ge 2^n.\]

\end{lemma}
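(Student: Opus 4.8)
The plan is to build $f$ by hand as a monotone ``staircase'' whose iteration behaves one way for the first $\ell$ steps and another way afterwards. Fix $\ell$ and pick an integer $M>\ell$. For $c\in\bbn$ let the $c$-th \emph{stage} be $S_c=\{\,z:\lfloor\log_2\log_2 z\rfloor=c\,\}$; the basic bookkeeping device is that $z\mapsto z^2$ sends $S_c$ into $S_{c+1}$ (it increases $\log_2\log_2 z$ by exactly $1$). On most stages $f$ will grow only polynomially, essentially like $z\mapsto z^2$, while on the stages $S_c$ with $c\equiv 0\pmod M$ it performs a single genuinely exponential ``jump'', roughly $z\mapsto 2^{\,2^{(\log_2 z)/2^{\ell}}}$. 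Below a fixed threshold $f$ is defined trivially; only the asymptotics matter.

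First I would verify the upper bound $f^{(\ell)}(n)=o(2^n)$. Because squaring increments the stage index by $1$ and the jump stages are $M>\ell$ apart, the orbit of $n$ either (i) does only squarings for its first $\ell$ steps, so $f^{(\ell)}(n)=n^{2^{\ell}}$ is polynomial, or (ii) does $t<\ell$ squarings, then one jump, then $\ell-t-1$ more squarings --- no second jump can intervene, since $M-1\ge\ell$. The exponent $1/2^{\ell}$ in the jump is chosen precisely so that in case (ii) the jump together with the surrounding squarings yields $\log_2 f^{(\ell)}(n)=O(\sqrt n)$, uniformly in $t$; either way $f^{(\ell)}(n)\le 2^{O(\sqrt n)}=o(2^n)$. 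The same mechanism shows that $f$ genuinely has fractional-exponential growth rather than merely quasi-polynomial growth, since a jump really is hit once the iteration runs past $M$ steps.

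Next I would show $f^{(m)}(n)\ge 2^n$ for all large $n$ with $m=2M>\ell$. A first jump occurs within $M$ steps; after at most $M-1$ further squarings a second jump occurs, at some step at most $2M$. Each jump, composed with the squarings preceding it, roughly exponentiates the triple-iterated logarithm of the current value, so two jumps push $\log_2\log_2\log_2 f^{(m)}(n)$ up from $\Theta(\log_2\log_2\log_2 n)$ to something of size $(\log_2 n)^{\Omega(1)}$, which dominates $\log_2\log_2 n=\log_2\log_2\log_2(2^n)$; hence $f^{(m)}(n)\ge 2^n$. Time-constructibility is routine: the closed form for $f(n)$ exhibits $f(n)$ as having only $O(\log_2 f(n))$ bits and as being computable from $n$ in time polynomial in $\log_2 f(n)$, so a Turing machine can compute $f(n)$ and then pad its computation to exactly $f(n)$ steps.

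The real obstacle, which in fact dictates the precise form of the construction, is keeping $f$ non-decreasing. A slow region ($z\mapsto z^2$) placed directly to the right of a jump region would force $f$ to decrease across the boundary: $2^{2^{(\log_2 z)/2^{\ell}}}$ is doubly exponential in $\log_2 z$ whereas $z^2$ is only singly exponential in $\log_2 z$, so the slow values can never overtake the jump value once $z$ is large. Making the construction monotone therefore requires a careful choice of where the jump stages sit and of the exact growth rate used on each intermediate stage --- for instance letting that rate depend on the stage index so that $f$ threads continuously between the value entering a stage and the value leaving it, or defining $f$ as a running maximum of the preliminary definition and then checking that the resulting constant ``plateaus'' are short enough not to damage the $o(2^n)$ estimate. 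Carrying out this bookkeeping, and re-checking that the two case analyses above survive the adjustments, is the bulk of the argument.
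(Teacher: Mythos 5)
There is a genuine gap, and it sits at the heart of your upper-bound argument rather than in the monotonicity bookkeeping you defer to the end. Your claim that ``no second jump can intervene, since $M-1\ge\ell$'' only controls the squaring steps, which move the stage index by $1$ per application. The jump itself does not: it sends a point of stage $c$ to a point of stage roughly $2^{c-\ell}$, and as $n$ ranges over a jump stage $S_c$ the landing stage $\lfloor(\log_2 n)/2^{\ell}\rfloor$ sweeps through the entire interval $[2^{c-\ell},2^{c+1-\ell})$, hence hits multiples of $M$ (and stages within $\ell-2$ of them) for infinitely many $n$. For such $n$ two jumps occur among the first $\ell$ applications, and two jumps already push $\log_2\log_2$ of the value from about $(\log_2 n)/2^{\ell}$ up to about $2^{(\log_2 n)/2^{\ell}-\ell}$, which dwarfs $\log_2 n=\log_2\log_2(2^n)$; so $f^{(\ell)}(n)\gg 2^n$ on an infinite set and the bound $f^{(\ell)}=o(2^n)$ fails (already for $\ell=2$). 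Nor can you patch this by simply deleting the offending jump stages: the landing interval of a single jump spans exponentially many stages, and pruning all jump stages out of it would force an orbit that lands there to square for an unbounded number of steps before the next jump, destroying the fixed bound $m$ in the lower bound. Your monotonicity fix has the same defect in disguise: the running-maximum plateau created after a jump stage is not ``short'' --- it stretches from stage $c+1$ to stage about $2^{c+1-\ell}$, swallowing the subsequent jump stages --- so the case analyses you promise to re-check do not survive.

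What is missing is a mechanism that \emph{synchronizes} the orbit, so that where an iterate lands is under control and the expensive steps are taken at a fixed rate. The paper achieves exactly this with a much more rigid construction: fix the finite block of integers in $(\exp_{k-1}(2),\exp_k(2)]$ (where $\exp_0(n)=n$, $\exp_{j+1}(n)=2^{\exp_j(n)}$), close it under iterated exponentiation to get a sparse ``ladder'', and let $f_k(n)$ be the least ladder element strictly above $n$. This $f_k$ is non-decreasing by construction, every orbit sits on the ladder after one application and thereafter advances exactly one rung per step, consecutive rungs above $n$ stay below $2^n$, and climbing one full block (about $\exp_k(2)-\exp_{k-1}(2)$ rungs) performs one exponentiation --- which is precisely the $\ell$-versus-$m$ separation the lemma asks for. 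Time-constructibility is then handled by replacing $f_k$ with the unary running time $\tilde f_k$ of a machine computing it, noting $f_k\le\tilde f_k=O(f_k)$; your ``pad to exactly $f(n)$ steps'' remark would also need an argument of this kind, but that is a minor point compared with the desynchronization problem above.
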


\begin{proof}
We present aa very straightforward construction which suffices for our purposes. For more subtle results on iterative roots of $\exp$ as a function of a real variable, see e.g. \cite{kneser:reelle},  \cite{szekeres:fractional}.

Define $\exp_k(n)$ by setting $\exp_0(n) := n$, $\exp_{k+1}(n) = 2^{\exp_k(n)}$.

Let $f_2$ be defined as follows. For $n \le 2$, $f_2(n)$ equals $3$. For $n \ge 3$, $f_2(n)$ is the smallest element of the sequence
\[3, 4, 2^3, 2^4, 2^{2^3}, 2^{2^4}, \ldots \]
which is strictly greater than $n$. A routine calculation shows that \mbox{$f_2(n) = o(2^n)$}, but $f_2 \circ f_2 \circ f_2 (n) > 2^n$.

For general $k \ge 1$, define $f_k$ as follows. For $n \le \exp_{k-1}(2)$, $f_k(n)$ equals $\exp_{k-1}(2) + 1$. For $n \ge \exp_{k-1}(2) + 1$, $f_k(n)$ is the smallest element of the sequence
\begin{multline*}
\exp_{k-1}(2) +1, \exp_{k-1}(2) +2, \ldots, \exp_k(2), \\ \exp(\exp_{k-1}(2) +1), \exp(\exp_{k-1}(2) +2), \ldots, \exp_{k+1}(2),\\
\exp_2(\exp_{k-1}(2) +1), \exp_2(\exp_{k-1}(2) +1), \ldots
\end{multline*}
which is strictly greater than $n$. A calculation shows that 
\[\underbrace{f_k \circ f_k \circ \ldots \circ f_k}_{\ell \textrm{ times}} = o(2^n) , \qquad \underbrace{f_k \circ f_k \circ \ldots \circ f_k}_{\ell+2 \textrm{ times}} > 2^n \] 
for $\ell := \exp_k(2) - \exp_{k-1}(2) - 1$.

It is not immediately obvious if the functions $f_k$ are time-constructible. 
However, let $\tilde f_k(n)$ be the time needed to compute $f_k$ on argument $n$ with both input and output in unary (on a fixed machine). 
By definition, $\tilde f_k$ is time-constructible. Moreover, it is clear from the definition of $f_k$  that for a reasonable choice of a machine computing $f_k$, $\tilde f_k$ will be non-decreasing and satisfy:
\[ f_k \le \tilde f_k = O(f_k),\]
which also implies
\[\underbrace{\tilde f_k \circ \tilde f_k \circ \ldots \tilde \circ f_k}_{\ell-1 \textrm{ times}} = o(2^n),\]
for $\ell$ as above.
\end{proof}

For every time-constructible $f$ of fractional-exponential growth, it is still the case $\linh$ is strictly contained in $\bigcup_k \Sigma_k$-$\mathrm{TIME}(f)$, the quantifier hierarchy determined by $f$ (cf.\ \cite[Theorem V.2.21]{hp93}). This is because a straightforward padding argument shows that $\bigcup_k \Sigma_k$-$\mathrm{TIME}(f) \subseteq \linh$ implies $\bigcup_k \Sigma_k$-$\mathrm{TIME}(f \circ f) \subseteq \bigcup_k \Sigma_k$-$\mathrm{TIME}(f)$ and so on,
eventually reaching $\eh \subseteq \linh$; but this contradicts Proposition \ref{prop:eh}.

However, the following statement, although highly implausible, appears to be consistent with present knowledge, and disproving it may well be beyond the reach of known methods:
\begin{multline*}\label{eqn:assumption}
\tag{$\spadesuit$}
\textrm{ ``for every $k$ there is some $f$ of fractional-exponential growth} \\
\textrm{such that }\Sigma^{f^{O(1)}}_k \subseteq \linh\textrm{''.}
\end{multline*}
By the discussion above, $f$, or more precisely the number $\ell$ of iterations of $f$ needed to reach $\exp$, would have to depend on $k$. A very slight modification of statement (\ref{eqn:assumption}) is the assumption we use to prove that 
$\Pi_1(\bbn) + \neg \Exp \vdash \bsi$.

\begin{theorem}\label{thm:full1}
If there exists a non-decreasing time-constructible function $f$ of fractional-exponential growth such that $\mathrm{DTIME}(f^{O(1)})^{\Sigma^p_k} \subseteq \linh$, then every model $\m \models \Pi_1(\bbn) + \neg \Exp$ has a proper end-extension to $\kk \models T^k_2$.
\end{theorem}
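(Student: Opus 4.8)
The plan is to run the construction from the proof of Theorem~\ref{thm:simple1} not in one step but in $\ell$ rounds, where $\ell$ is chosen with $\underbrace{f\circ\cdots\circ f}_{\ell}\ge 2^{n}$, so that each round only involves a jump of ``size~$f$''; a one-shot exponential jump would produce relations lying far outside $\mathrm{DTIME}(f^{O(1)})^{\Sigma^p_k}$, which is why the number of rounds has to grow with $k$. From the hypothesis I extract, for each $R\in\mathrm{DTIME}(f^{O(1)})^{\Sigma^p_k}$, a $\Delta_0$ formula $R^{\lin}$ with $R=R^{\lin}$ on $\bbn$, and I apply this to two kinds of relations. First, writing $J(x):=2^{f(|x|)}$ (a legitimate function since $f$ is time-constructible), the relation ``$c=J(d)$'' is decidable in time $O(f(|d|+|c|))$ and hence has a translation $(c=J(d))^{\lin}$. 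Secondly, for any $g$ computed by a polynomial-time machine with a $\Sigma^p_k$ oracle, the relation ``the machine for $g$ halts on input $\vec d$ with output $c$'' is in $\mathrm{P}^{\Sigma^p_k}$, hence (assuming w.l.o.g.\ $f(n)\ge n$) in $\mathrm{DTIME}(f^{O(1)})^{\Sigma^p_k}$; this covers the $\Sigma^b_k$ least-witness Skolem functions, whose graphs $\varphi(\vec d,c)\wedge\forall c'<c\,\neg\varphi(\vec d,c')$ stay of low complexity precisely because $\Sigma^b_k$, unlike $\Pi^b_k$, is closed under bounded universal quantification.

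Fix $\m\models\Pi_1(\bbn)+\neg\Exp$ and $a_*\in\m$ with $2^{a_*}$ nonexistent in $\m$; a routine $\Delta_0$ bit-coding argument then shows that no element of $\m$ has binary length $\ge a_*$, so all of $\m$ lies below $2^{a_*}$. Starting from $\m_0:=\m$ I build a chain $\m_0\subseteq\m_1\subseteq\cdots\subseteq\m_\ell=:\kk$, where $\m_{i+1}$ is obtained by taking a $\Delta_0$-elementary extension $\n_i$ of $\m_i$ to a model of the true $L_{\bba}$-theory of $\bbn$ and letting $\m_{i+1}$ be the substructure of $\n_i$ generated from $\m_i\cup\{J(d):d\in\m_i\}$ by closure under the provably total functions of $T^k_2$ (polynomial time with a $\Sigma^p_k$ oracle) and the relevant $\Sigma^b_k$ and sharply bounded Skolem functions. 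For this to make sense round by round one needs $\m_i\models\Pi_1(\bbn)$ and enough elementarity preserved, and tracking these invariants is the delicate part of the argument.

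Each round is an end-extension, by the argument of Theorem~\ref{thm:simple1}: if $c\in\m_{i+1}$ and $c\le b$ for some $b\in\m_i$, write $c=G(\vec d,J(\vec d'))$ with $\vec d,\vec d'\in\m_i$ and $G$ a composition of the round's functions. Because $J$ is applied only to elements of $\m_i$, the relation $R(\vec d,\vec d',c):\equiv[\,G(\vec d,J(\vec d'))=c\,]$ is decided by first computing the $J$-values in time $O(f(\cdot))$ and then running $G$ on strings all of whose lengths are $O(f(\text{input length}))$, so $R\in\mathrm{DTIME}(f^{O(1)})^{\Sigma^p_k}$ and has a translation $R^{\lin}$; since $\n_i\models R(\vec d,\vec d',c)$, $c\le b$, and $\m_i\prec_{\Delta_0}\n_i$, there is $c''\in\m_i$ with $c''\le b$ and $\m_i\models R^{\lin}(\vec d,\vec d',c'')$, and transferring back up forces $c''=c$, so $c\in\m_i$. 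The essential new point over Theorem~\ref{thm:simple1} is this per-round length accounting: within a single round the blow-up from $\m_i$ is a single application of $f$ followed by polynomials, so nothing escapes $f^{O(1)}$ of the relevant input lengths, whereas the composition of all $\ell$ rounds would not.

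Composing the rounds gives $\kk\supseteq_e\m$; $\kk$ is a $\Sigma^b_k$-elementary substructure of $\n_{\ell-1}$ (closure under the $\Sigma^b_k$ and sharply bounded Skolem functions), so by the standard model theory of bounded arithmetic $\kk\models T^k_2$; and the extension is proper, since $a_*\in\m_0$ implies $J^{(\ell)}(a_*)\in\m_\ell=\kk$, and this element has binary length $f^{(\ell)}(|a_*|)+1\ge 2^{|a_*|}+1>a_*$, hence lies strictly above all of $\m$. The step I expect to be the main obstacle is keeping the induction afloat: one must arrange that each $\m_i$ still satisfies $\Pi_1(\bbn)$ and sits $\Delta_0$-elementarily inside a model in which the $T^k_2$-functions are total (so that the translations $R^{\lin}$ can be transferred), while at the same time preventing any round's generation relation from escaping $\mathrm{DTIME}(f^{O(1)})^{\Sigma^p_k}$. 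These demands pull in opposite directions --- closing under more functions helps preserve the needed truth but raises oracle power and running time --- and reconciling them is exactly why $f$, or more precisely the number $\ell$ of iterations of $f$ needed to reach $\exp$, has to be allowed to depend on $k$.
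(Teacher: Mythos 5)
Your per-round mechanism (translate the graph of ``one application of $2^{f(|\cdot|)}$ followed by $\pv_{k+1}$ functions'' into a $\Delta_0$ formula and transfer it through a $\Delta_0$-elementary extension satisfying true arithmetic) is exactly the engine of the paper's proof, but the $\ell$-round scaffolding you build around it is both unnecessary and the location of a genuine gap that you yourself flag and do not close. The paper's proof is a single round: since $f$ has fractional-exponential growth and $\m \models \neg\Exp$, $\Log\m$ cannot be closed under $f$, so there is already \emph{one} element $d \in \m$ with $h(d) = 2^{f(|d|)} \notin \m$; one then closes $\m \cup \{h(d)\}$ under $\pv_{k+1}$ functions inside a single $\Delta_0$-elementary extension $\n \models \mathrm{Th}_{L_{\bba}}(\bbn)$ of $\m$. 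Properness is immediate (no length-growth accounting over $\ell$ rounds is needed), and the only elementarity ever used is $\m \preccurlyeq_{\Delta_0} \n$, which is given at the outset. Your opening claim that a one-shot construction ``would produce relations lying far outside $\mathrm{DTIME}(f^{O(1)})^{\Sigma^p_k}$'' misreads the situation: the paper never takes an exponential jump, only a single $f$-sized jump, and the dependence of $f$ (equivalently of $\ell$) on $k$ comes from the statement of the hypothesis (for a fixed $f$ the containment of \emph{all} levels $\Sigma_k$-$\mathrm{TIME}(f^{O(1)})$ in $\linh$ is refuted by padding and Proposition~\ref{prop:eh}), not from any round-counting inside the proof of Theorem~\ref{thm:full1}.

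The gap in your version is the induction invariant you call ``the delicate part'': for round $i+1$ you need $\m_i$ to sit $\Delta_0$-elementarily inside a model of true $L_{\bba}$ arithmetic (equivalently, $\m_i \models \Pi_1(\bbn)$), but closure under $\pv_{k+1}$ functions and $\Sigma^b_k$ least-witness functions only yields $\pv_{k+1}$-elementarity in $\n_{i}$. A $\Delta_0$ formula of $L_{\pa}$ may have arbitrarily many quantifier alternations, and its least-witness functions need not be computable in polynomial time with a $\Sigma^p_k$ oracle; the hypothesis $\mathrm{DTIME}(f^{O(1)})^{\Sigma^p_k} \subseteq \linh$ gives no collapse that would change this. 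Closing instead under Skolem functions for all of $\Sigma^b_\infty$ would restore the needed elementarity but, as you note, pushes the generation relation of a round outside $\mathrm{DTIME}(f^{O(1)})^{\Sigma^p_k}$, and a one-shot transfer of the full $\ell$-round closure fails for the same reason (intermediate values of iterated $h$ are too long). So as written the construction does not get past its second round; the repair is not to strengthen the invariant but to drop the iteration and use $\neg\Exp$ to find the single escaping element $h(d)$, as the paper does.
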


\begin{proof}

Assume that $f$ is non-decreasing, has fractional-exponential growth, and $\mathrm{DTIME}(f^{O(1)})^{\Sigma^p_k}$ is contained in $\linh$. 
Let $h$ be the time-constructible function defined by $h(x) = 2^{f(|x|)}$. Note that for an input $x$ with $|x|=n$, the value $h(x)$ (written in binary) is computable in deterministic time $f(n)$. Because of this and the monotonicity of $f$, for every $\pv_{k+1}$ formula $\psi(x,y)$, the formula $\psi(x,h(y))$ defines a property in $\mathrm{DTIME}(f^{O(1)})^{\Sigma^p_k}$. Therefore, by our assumption, there exists a $\Delta_0$ formula $\psi^{f\to\lin}(x,y)$ such that $\psi(x,h(y))$ and $\psi^{f\to\lin}(x,y)$ are equivalent in $\bbn$.

Let $\m \models \Pi_1(\bbn) + \neg \Exp$. Since the totality of $f$ implies the totality of the exponential function, $\Log \m$ is not closed under $f$, so there must be $d \in \m$ such that $h(d) \notin \m$. As in the proof of Theorem \ref{thm:simple1}, $\m$ has a $\Delta_0$-elementary extension to a model $\n$ satisfying true arithmetic in the language $L_\bba$. Of course, $\n$ contains $h(d)$. Define $\kk$ to be the closure of $\m \cup \{h(d)\}$ in $\n$ under $\pv_{k+1}$ functions. By construction, $\kk$ is a proper extension of $\m$, a $\pv_{k+1}$-elementary substructure of $\m$ and a model of $T^k_2$; we only need to show that $\kk \supseteq_e \m$.

Let $g$ be a $\pv_{k+1}$ function and  assume that for $\ba, b \in\m$, we have $g^\n(\ba,h(d)) \leq b$. Then 
$\n \models \es{z}{b} (z = g(\ba,h(d)))$, and thus also 
\[ \n \models \es{z}{b}(z=g(\bx,y))^{f\to\lin}[\ba/\bx,d/y].\]
By $\Delta_0$-elementarity, the  same formula holds in $\m$, so there is $c \in \m$ such that 
\[\m \models (z=g(\bx,y))^{f\to\lin}[\ba/\bx, d/y,c/z].\] Thus also \[\n \models (z=g(\bx,y))^{f\to\lin}[\ba/\bx,  d/y,c/z],\]
which means that $\n \models c = g(\ba,h(d))$. In other words, $g^\n(\ba,h(d)) \in \m$. 
\end{proof}

As in the context of $\neg\Omega_1$, end-extending arbitrary models of $\ido+\neg\exp$ rather than just models of $\Pi_1(\bbn)+\neg\exp$ requires a stronger assumption. The assumption in Theorem \ref{thm:full2} below can be seen as a formalization of the statement ``$\ido \vdash \mathrm{DTIME}(f^{O(1)})^{\Sigma^p_k} \subseteq \linh$''.

The assumption  in Theorem \ref{thm:full2} is more complicated than the one in Theorem \ref{thm:simple2}, and it mentions an additional constant. Another way of formulating the assumption would involve a two-sorted language. The main reason for the complications is that if we introduce a symbol for a function of fractional-exponential growth and allow terms involving nesting of the new function symbol, we will get a theory which proves the totality of $\exp$ and thus has no chance of being $\Pi_1$-conservative over $\ido$. 

\begin{theorem}\label{thm:full2}\, 
Assume that there is a time-constructible function $f$ which is provably non-decreasing in $\ido$ and satisfies the following:
\begin{itemize}
\item[(i)] for some $\ell \in \bbn$, 
\[\ido \vdash \forall x\, \forall y\,(y = \underbrace{f\circ f \circ \ldots \circ f}_{\ell \textrm{ times}}(x) \imp \eb{z}{y} (z = 2^x)).\]
\item[(ii)] for $d$ a new individual constant, there is a translation $\psi \mapsto \psi^{f\to\lin}$ of $\pv_{k+1}$ into $\Delta_0$ formulas such that \[T^k_2 + \exists y\, (y = 2^{f(|d|)}) + \{ \forall x \, (\psi(x,2^{f(|d|)}) \iff \psi^{f\to\lin}(x,d)) : \psi \in \pv_{k+1}\}\] is $\Pi_1(d)$-conservative over $\ido$.
\end{itemize}
 Then every model $\m \models \ido + \neg \Exp$ has a proper end-extension to $\kk \models T^k_2$.
\end{theorem}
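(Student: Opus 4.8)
The plan is to mimic the proof of Theorem~\ref{thm:full1} almost verbatim, making the two substitutions forced by the change of base theory from $\Pi_1(\bbn)$ to $\ido$. First, the element $d$ whose ``exponential'' $h(d)=2^{f(|d|)}$ is adjoined to the model must be produced from hypothesis~(i), instead of from the bare fact that $f$ has fractional-exponential growth. Second, the ambient model $\n$ (in Theorem~\ref{thm:full1} a model of true arithmetic, available because $\m\models\Pi_1(\bbn)$) is now obtained from the $\Pi_1(d)$-conservativity in~(ii), exactly as the step of Theorem~\ref{thm:simple2} that upgrades Theorem~\ref{thm:simple1}. Everything in between is identical to Theorem~\ref{thm:full1}.

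\emph{Producing $d$.} Let $\m\models\ido+\neg\Exp$, write $h(x)=2^{f(|x|)}$, and let $f^{(\ell)}$ denote the $\ell$-fold iterate of $f$ with $\ell$ as in~(i). I claim there is $d\in\m$ with $h(d)\notin\m$. Otherwise $\Log\m$ would be closed under $f$: any $c\in\Log\m$ is the length of some element of $\m$ (e.g.\ $2^{c-1}$), so $2^{f(c)}=h(2^{c-1})\in\m$ by assumption, i.e.\ $f(c)\in\Log\m$. Then $\Log\m$ is closed under $f^{(\ell)}$, so for arbitrary $x\in\m$ we get $f^{(\ell)}(|x|)\in\Log\m$; but $\ido$ proves $x<2^{|x|}$ and, by~(i), $2^{|x|}\le f^{(\ell)}(|x|)$, so $x<f^{(\ell)}(|x|)\in\Log\m$, and since $\Log\m$ is downward closed this gives $2^x\in\m$. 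As $x$ was arbitrary, $\m\models\Exp$, a contradiction. (Here I use that $f$ is provably non-decreasing in $\ido$.) Fix such a $d$.

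\emph{Passing to $\n$ and forming $\kk$.} Let $S$ be the $(L_\bba\cup\{d\})$-theory displayed in~(ii). A compactness argument, just as in Theorem~\ref{thm:simple2}, shows that the $\Pi_1(d)$-conservativity of $S$ over $\ido$ yields a $\Delta_0$-elementary extension of $\m$ to a model $\n\models S$ that interprets the constant $d$ as the same element $d\in\m$: the theory $S$ together with the $\Delta_0$-elementary diagram of $(\m,d)$ is consistent, since a finite fragment reduces to $S$ plus one sentence $\exists\bz\,\delta(\bz,d)$ with $\delta\in\Delta_0$ that holds in $\m$, and were this inconsistent with $S$ then $S$ would prove the $\Pi_1(d)$ sentence $\forall\bz\,\neg\delta(\bz,d)$, whence by~(ii) $\ido\vdash\forall\bz\,\forall w\,\neg\delta(\bz,w)$, contradicting $\m\models\exists\bz\,\delta(\bz,d)$. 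Let $e\in\n$ witness the axiom $\exists y\,(y=2^{f(|d|)})$ of $S$, and let $\kk$ be the closure of $\m\cup\{e\}$ in $\n$ under $\pv_{k+1}$ functions. Exactly as in Theorem~\ref{thm:full1}, $\kk$ is a $\pv_{k+1}$-elementary substructure of $\n$ and a model of $T^k_2$, and it extends $\m$ properly: the formula $y=h(d)$ is $\Delta_0$ (as noted in Theorem~\ref{thm:full1}, $h(d)$ is computed in deterministic time $\approx f(|d|)\approx|h(d)|$), and $\n\models e=h(d)$ while $h(d)\notin\m$, so $\Delta_0$-elementarity forbids $e\in\m$.

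\emph{The end-extension, and the main difficulty.} That $\kk\supseteq_e\m$ is proved exactly as in Theorem~\ref{thm:full1}. Every element of $\kk$ is $g^\n(\ba,e)$ for a $\pv_{k+1}$ function $g$ and a tuple $\ba$ from $\m$, so it suffices to show that $g^\n(\ba,e)\le b$ with $\ba,b\in\m$ forces $g^\n(\ba,e)\in\m$. Taking for $\psi$ the quantifier-free $\pv_{k+1}$ formula $z=g(\bx,y)$, one has $\n\models\psi(\ba,g^\n(\ba,e),e)$, hence $\n\models\psi^{f\to\lin}(\ba,g^\n(\ba,e),d)$ (using $\n\models S$ and $e=2^{f(|d|)}$ in $\n$), hence $\n\models\eb{z}{b}\psi^{f\to\lin}(\ba,z,d)$; by $\Delta_0$-elementarity $\m$ satisfies this too, so $\m\models\psi^{f\to\lin}(\ba,c,d)$ for some $c\le b$ in $\m$, and transferring back and applying $S$ once more gives $\n\models\psi(\ba,c,e)$, i.e.\ $c=g^\n(\ba,e)\in\m$. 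In the end no step is genuinely hard: the argument is a recombination of the proofs of Theorems~\ref{thm:simple2} and~\ref{thm:full1}. The two points I expect to need care are precisely the two new ones: it is hypothesis~(i), not plain fractional-exponential growth, that must be invoked when producing $d$; and the $\Pi_1(d)$-conservativity in~(ii) has to be applied so that the constant $d$ is \emph{preserved} along the passage from $\m$ to $\n$, since this is what links $e$ back to parameters available in $\m$.
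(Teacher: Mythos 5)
Your proof is correct and takes essentially the same approach as the paper: the paper's own proof of Theorem~\ref{thm:full2} is just the one-line remark that one modifies the proof of Theorem~\ref{thm:full1} in the same way that Theorem~\ref{thm:simple1} was modified to obtain Theorem~\ref{thm:simple2}, and your write-up carries out exactly this recombination. The details you supply that the paper leaves implicit --- producing $d$ with $2^{f(|d|)} \notin \m$ from hypothesis (i) rather than from true fractional-exponential growth, the compactness argument from $\Pi_1(d)$-conservativity keeping the interpretation of the constant $d$ fixed, and the properness of the extension via the $\Delta_0$ graph of $y = 2^{f(|x|)}$ --- are all correctly handled.
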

\begin{proof}
It suffices to modify the proof of Theorem \ref{thm:full1} in the same way as the proof of Theorem \ref{thm:simple1} was modified to obtain Theorem \ref{thm:simple2}.
\end{proof}

\begin{corollary}\label{cor:full-coll}
If for every $k \in \bbn$ there is a non-decreasing time-constructible $f$ of fractional-exponential growth such that $\Sigma_k$-$\mathrm{TIME}(f^{O(1)}) \subseteq \linh$, then $\Pi_1(\bbn) + \neg \Exp \vdash \bsi$. If the assumption of Theorem \ref{thm:full2} holds for every $k \in \bbn$, then $\ido + \neg \Exp \vdash \bsi$.
\end{corollary}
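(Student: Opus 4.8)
The plan is to obtain this from Theorems~\ref{thm:full1} and~\ref{thm:full2} in the same way that Corollary~\ref{cor:simple-coll} was obtained from Theorems~\ref{thm:simple1} and~\ref{thm:simple2}, with one extra twist. Unlike Theorem~\ref{thm:simple1}, Theorem~\ref{thm:full1} only produces an end-extension to a model of $T^k_2$ rather than of full $T_2$, and --- by Proposition~\ref{prop:eh} together with the padding argument preceding~$(\spadesuit)$ --- one cannot do better with a single $f$. So we cannot verify all of $\bsi$ in $\m$ at one stroke; instead we verify it one instance at a time, allowing $k$ to grow with the complexity of the instance. This is exactly why the hypotheses of the corollary are stated for every $k$.

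The first ingredient is a mild sharpening of the folklore fact used in Corollary~\ref{cor:simple-coll}: if $\kk \supseteq_e \m$ is a \emph{proper} end-extension of $L_{\pa}$-structures, $\varphi(x,y,\bz)$ is a fixed $\Delta_0$ formula, and $\kk$ satisfies the least-number principle (with parameters) for $\Delta_0$ formulas of quantifier depth at most that of $\as{x}{u}\es{y}{w}\varphi(x,y,\bz)$, then $\m$ satisfies the instance of $\bsi$ with matrix $\varphi$. This is just the Paris--Kirby argument: assuming $\m \models \as{x}{u} \exists y\, \varphi(x,y,\ba)$ with $u,\ba \in \m$ (otherwise the instance is vacuous), fix some $b \in \kk \setminus \m$; since $u \in \m$, every $x < u$ lies in $\m$, so $\Delta_0$-absoluteness between a structure and any of its end-extensions gives $\kk \models \as{x}{u}\es{y}{b}\varphi(x,y,\ba)$; by the assumed least-number principle in $\kk$ there is a least $w_0 \in \kk$ with $\kk \models \as{x}{u}\es{y}{w_0}\varphi(x,y,\ba)$; if $w_0 \in \m$ we are done by absoluteness, and otherwise $w_0 > \m$, hence $w_0-1 > \m$, and minimality of $w_0$ together with the antecedent and one more appeal to absoluteness produces a contradiction. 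Properness is used only to choose $b$, and no induction in $\m$ is needed.

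Now fix an instance of $\bsi$ with matrix $\varphi$ and let $k$ be its quantifier depth plus two. First, $T^k_2$ proves the least-number principle demanded above: once the free variables of a $\Delta_0$ formula of quantifier depth at most $k$ are bounded by a parameter $a$, all its quantifiers become bounded by a fixed power of $a$, so the formula lies in $\Sigma^b_k \cup \Pi^b_k$, and $T^k_2$ proves $\Sigma^b_k$-IND, hence $\Pi^b_k$-IND and the two corresponding least-number principles. Second, the hypotheses of the corollary must deliver, at this $k$, the end-extension asserted by Theorem~\ref{thm:full1} (resp.\ by Theorem~\ref{thm:full2}). For the first part, apply the hypothesis at level $k+1$ to get a non-decreasing time-constructible $f$ of fractional-exponential growth with $\Sigma_{k+1}$-$\mathrm{TIME}(f^{O(1)}) \subseteq \linh$; the standard simulation of a $\mathrm{DTIME}(f^{O(1)})$ computation with a $\Sigma^p_k$ oracle by a $\Sigma_{k+1}$ machine --- guess all oracle answers together with $\Sigma_k$-certificates for the ``yes'' answers in the initial existential phase, then check correctness of all the answers in a single following $\Pi_k$ phase --- shows $\mathrm{DTIME}(f^{O(1)})^{\Sigma^p_k} \subseteq \Sigma_{k+1}$-$\mathrm{TIME}(f^{O(1)}) \subseteq \linh$, so Theorem~\ref{thm:full1} applies at level $k$ and every $\m \models \Pi_1(\bbn) + \neg \Exp$ has a proper end-extension to some $\kk \models T^k_2$. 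For the second part, the hypothesis is by definition exactly what Theorem~\ref{thm:full2} requires at level $k$, so every $\m \models \ido + \neg \Exp$ has such an end-extension.

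Finally, given a model $\m$ of $\Pi_1(\bbn) + \neg \Exp$ (resp.\ of $\ido + \neg \Exp$) and an instance of $\bsi$, take $k$ as above, let $\kk \supseteq_e \m$ be the proper end-extension to a model of $T^k_2$ just produced, and apply the sharpened fact to conclude that $\m$ satisfies that instance; since the instance and $\m$ were arbitrary, the claimed provability follows by completeness. I do not expect a genuine obstacle --- the statement really is a corollary --- but the one thing that needs care is this level-by-level bookkeeping: checking that $T^k_2$ carries enough of $\ido$ to run Paris--Kirby for instances of bounded complexity, and that passing between the $\mathrm{DTIME}(f^{O(1)})^{\Sigma^p_k}$ of Theorems~\ref{thm:full1}--\ref{thm:full2} and the $\Sigma_k$-$\mathrm{TIME}(f^{O(1)})$ of the present hypotheses costs only one extra level of alternation, which is harmless since those hypotheses quantify over all $k$.
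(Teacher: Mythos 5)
Your proposal is correct, and at the level of overall structure it is the paper's own argument: derive the corollary from Theorems~\ref{thm:full1} and~\ref{thm:full2}, one level $k$ at a time, and finish with the usual Paris--Kirby end-extension argument (which the paper only gestures at as ``the usual argument'' and you spell out, including the point that $T^k_2$ supplies enough induction/minimization for $\Delta_0$ formulas of bounded quantifier depth). The one substantive place where you diverge is the bridge between the corollary's hypothesis and the hypothesis of Theorem~\ref{thm:full1}: the paper simply asserts $\mathrm{DTIME}(f^{O(1)})^{\Sigma^p_k} \subseteq \Sigma_k\textrm{-}\mathrm{TIME}(f^{O(1)})$ with the \emph{same} $k$ and applies the hypothesis at level $k$, whereas you prove only the (easy and safe) containment into $\Sigma_{k+1}\textrm{-}\mathrm{TIME}(f^{O(1)})$ and invoke the hypothesis at level $k+1$. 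Your version costs nothing, since the hypothesis is quantified over all $k$, and it is arguably the more defensible reading: the paper's stated containment, taken literally as a general fact about all admissible $f$ (e.g.\ $f(n)=2^{n/2}$, $k=1$), would amount to $\mathrm{E}^{\mathrm{NP}} \subseteq \mathrm{NE}$, which is not a known ``notice that''; so your extra alternation is exactly the right fix. One small wrinkle on your side: the blanket claim that $T^k_2$ proves the least-number principle for all $\Pi^b_k$ formulas is not standard (unlike $\Sigma^b_k$-MIN); but the instance you actually need is for $\theta(w) \equiv \as{x}{u}\es{y}{w}\varphi(x,y,\bar a)$, which is monotone in $w$, so the least witness can be extracted from $\Sigma^b_k$-IND applied to $\neg\theta$ (find a switch point $\neg\theta(w)\wedge\theta(w+1)$), or one can simply take $k$ one larger so that $\theta \in \Sigma^b_k$; either repair is immediate and does not affect the argument.
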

\begin{proof}
To prove the first  statement, notice that $\mathrm{DTIME}(f^{O(1)})^{\Sigma^p_k}$ is contained in $\Sigma_k$-$\mathrm{TIME}(f^{O(1)})$.
Hence, if $\Sigma_k$-$\mathrm{TIME}(f^{O(1)}) \subseteq \linh$, then by Theorem \ref{thm:full1} every model $\m \models \Pi_1(\bbn) + \neg \Exp$ has an proper end-extension to a model of $T^k_2$. If this happens for every $k$, then $\m$ has a proper end-extension
to a model of $T^k_2$ for every $k$, which, by the usual argument, implies $\m \models \bsi$.

The second statement is proved in a similar way, using Theorem \ref{thm:full2} instead of Theorem \ref{thm:full1}.
\end{proof}

\section{A relativized result}

In this section, we take up the question whether a proof of $\ido + \neg\exp \not\vdash \bsi$ could relativize to an arbitrary oracle. The ideal result here would be to construct an oracle relative to which statement \eqref{eqn:assumption} holds, and use that to show that a proof of $\ido + \neg\exp \not\vdash \bsi$ would have to be ``non-relativizing''. Our theorem below is weaker than that; nevertheless, it is strong enough to highlight a major difference with the case without $\neg \Exp$.

In attempting to discuss relativizations of $\ido + \neg\exp \not\vdash \bsi$, we encounter the problem that it is less obvious how to relativize an unprovability statement than a provability statement. If $\alpha$ is a new predicate symbol (representing a potentially arbitrary oracle) and two theories $T$ and $S$ are such that it makes sense to speak of their relativized versions $T(\alpha), S(\alpha)$, then ``$T \vdash S$ does not relativize''  is simply ``$T(\alpha) \not\vdash S(\alpha)$''. However, what does ``$T \not \vdash S$ does not relativize'' mean? ``$T(\alpha) \vdash S(\alpha)$'' is far too strong, and in particular implies $T \vdash S$. On the other hand ``there are additional axioms about $\alpha$ such that $T(\alpha)$ extended by the new axioms proves $S(\alpha)$'' is obviously too weak, because the new axioms could include $S(\alpha)$ itself.  

In the specific case of $\ido + \neg \Exp \not\vdash \bsi$, we are in the fortunate situation that the right-hand side of the $\not \vdash$ symbol has higher quantifier complexity than the left-hand side. This lets us explicate ``$\ido + \neg \Exp \not\vdash \bsi$ does not relativize'' as ``there are low-complexity axioms about $\alpha$ such that \mbox{$\ido(\alpha) + \neg \Exp$} is consistent and proves $\bsi(\alpha)$'', where the ``low-complexity axioms'' can for instance say that some local conditions on $\alpha$ are satisfied at every input; in other words, they can be $\Pi_1(\alpha)$ sentences.

We conjecture the following:

\begin{conjecture}
Let $\alpha$ be a new unary relation symbol. There exists a consistent recursively axiomatized $\Pi_1(\alpha)$ theory
$T(\alpha) \supseteq \ido(\alpha)$ such that $T(\alpha) + \neg \Exp \vdash \bsi(\alpha)$.
\end{conjecture}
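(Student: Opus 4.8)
The plan is to adapt the ``end-extensions from containments'' machinery of Theorem~\ref{thm:full1}, but carried out \emph{relative to an oracle} $\alpha$, so that the desired conservativity statement holds not in the true standard model but in a model tailored to $\alpha$. Concretely, I would first prove a purely complexity-theoretic lemma: for each fixed $k$ there is an oracle $A \subseteq \bbn$, together with a time-constructible function $f$ of fractional-exponential growth (with the number $\ell$ of iterations reaching $\exp$ depending on $k$), such that relative to $A$ one has $\mathrm{DTIME}^{A}(f^{O(1)})^{\Sigma^{p,A}_k} \subseteq \linh^A$. This is exactly the ``lemma in computational complexity'' flagged in the introduction — that relative to a suitable oracle the $k$-th level of the exponential-time hierarchy sits inside the linear-time hierarchy — and it is what lets us pretend $(\spadesuit)$ holds without contradicting Proposition~\ref{prop:eh}, which is an \emph{unrelativized} statement. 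The standard route is a direct diagonalization/encoding argument: design $A$ to encode, at inputs of length roughly $f(n)$, the answers to the relevant $\mathrm{DTIME}(f^{O(1)})^{\Sigma^p_k}$ queries on inputs of length $n$, so that an $\linh^A$ machine can recover them by a single bounded block of quantifiers over the oracle; the slow growth of $f$ and its fractional-exponential iteration behavior are arranged so that the encoding length stays polynomial (indeed linear) in the thing being encoded and the padding argument that would otherwise collapse $\eh^A$ into $\linh^A$ is blocked.

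Given that lemma, I would build $T(\alpha)$ by essentially running the construction behind Theorem~\ref{thm:full1} uniformly in $k$ and then extracting a $\Pi_1(\alpha)$ axiomatization. Here $\alpha$ is interpreted as (the characteristic function of) the oracle $A$, but the point is that the theory $T(\alpha)$ should only assert \emph{$\Pi_1(\alpha)$ consequences} of ``$\alpha$ behaves like such an $A$ for every $k$'' — for instance, that for every $k$ and every $x$, the local encoding conditions relating $\alpha$-values at length $\sim f_k(|x|)$ to the relevant relativized computations at $x$ are satisfied. These are genuinely $\Pi_1(\alpha)$ (universally quantified $\Delta_0(\alpha)$) sentences, they are recursively axiomatizable since the $f_k$'s and the relevant machines are explicit, and they are consistent because the real $A$ from the lemma is a model of them (over $\bbn$). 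Then, inside any model $\m \models \ido(\alpha) + T(\alpha) + \neg\Exp$, I would replay the proof of Theorem~\ref{thm:full1}: pick $d \in \m$ with $h_k(d) = 2^{f_k(|d|)} \notin \m$, take a $\Delta_0(\alpha)$-elementary extension $\n$ to a model of true relativized arithmetic, close $\m \cup \{h_k(d)\}$ under $\pv_{k+1}^{\alpha}$-functions to get $\kk_k \models T^k_2(\alpha)$ end-extending $\m$ — the $\Pi_1(\alpha)$ axioms of $T(\alpha)$ are exactly what makes the translations $\psi \mapsto \psi^{f\to\lin}$ correct along the $\Delta_0(\alpha)$-elementary extension — and conclude $\m$ has a proper end-extension to a model of $T^k_2(\alpha)$ for every $k$, hence $\m \models \bsi(\alpha)$.

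The main obstacle I expect is the complexity-theoretic lemma and, in particular, getting it to hold \emph{simultaneously for a single oracle} across all $k$ while keeping the axioms $\Pi_1(\alpha)$ and the theory consistent. For a fixed $k$ the oracle construction is a fairly routine encoding, but a naive union over $k$ runs into tension: the function $f_k$ must have slower and slower fractional-exponential growth as $k$ grows (by the padding argument forcing $\ell$ to depend on $k$), yet all the encodings must coexist in one oracle without clobbering each other or accidentally speeding up $\exp$. Interleaving the encodings on disjoint length-intervals, with a bookkeeping scheme that reserves, for each $k$, a sparse set of lengths around the iterates of $f_k$, should work, but verifying that the resulting $\alpha$ still admits the clean $\Pi_1(\alpha)$ description and that $\ido(\alpha) + T(\alpha) + \neg\Exp$ does not become inconsistent (e.g.\ does not inadvertently prove $\Exp$ via some derived term) is where the real care is needed. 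A secondary subtlety is that ``$\Sigma_k$-$\mathrm{TIME}(f^{O(1)}) \subseteq \linh$'' as used in Corollary~\ref{cor:full-coll} must be available relative to $\alpha$ for the end-extension argument, so the lemma should really be stated for the $\Sigma_k$-time classes, not merely the $\mathrm{DTIME}(f^{O(1)})^{\Sigma^p_k}$ classes; arranging the encoding to handle the extra alternation is not difficult but needs to be set up from the start. I do not see an essential obstacle beyond these, which is why I state it only as a conjecture rather than a theorem: the combinatorics of the multi-$k$ oracle construction, while plausibly doable, have not been carried out here.
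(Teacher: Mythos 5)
You are attempting to prove a statement that the paper itself leaves open: it is labelled a conjecture, and the paper explicitly says it can only obtain the weaker Theorem~\ref{thm:relativization} (finite fragments $\mathrm{B}(\alpha)$ of $\bsi(\alpha)$, one oracle per fragment). Your overall reduction is exactly the route the paper envisages in its concluding remarks: build a relativized world in which \eqref{eqn:assumption} holds, take the $\Pi_1(\alpha)$ consequences of a theory true in $(\bbn, A)$, and replay the end-extension argument of Theorems~\ref{thm:full1}/\ref{thm:full2} relative to $\alpha$. That part is sound and matches the paper's machinery. The genuine gap is the complexity-theoretic lemma you need: a \emph{single} oracle $A$ such that for \emph{every} $k$ there is a fractional-exponential $f_k$ with $\Sigma_k$-$\mathrm{TIME}(f_k^{O(1)})(A) \subseteq \linh(A)$. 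This is precisely the open problem, and your treatment of it (``a fairly routine encoding'' for fixed $k$, ``interleaving the encodings \ldots should work'' for all $k$) does not constitute a proof. Even for fixed $k$ the naive plan of encoding the answers into $A$ at lengths around $f(n)$ is circular: the $\Sigma^p_k(\alpha)$- and $\mathrm{DTIME}(f^{O(1)})^{\Sigma^p_k(\alpha)}$-machines query the very oracle into which you are writing their answers. That is why the known constructions (Dekhtyar, Heller, and the paper's Lemma~\ref{lem:oracle}) determine the oracle by a careful stage-by-stage forcing argument, which for $k \ge 2$ requires $k$ iterated applications of a H{\aa}stad-style switching lemma (Section~\ref{sec:switching}), not an encoding trick. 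And for all $k$ simultaneously, no construction is known; note the constraints your interleaving must respect: Proposition~\ref{prop:eh} relativizes, so $\eh(A) \not\subseteq \linh(A)$ for every $A$, and the padding argument also relativizes, so for any single fractional-exponential $f$ the full hierarchy $\bigcup_k \Sigma_k$-$\mathrm{TIME}(f)(A)$ cannot lie in $\linh(A)$. Hence the $f_k$ must genuinely degrade with $k$, and each new stage of the construction must avoid destroying the containments already secured for smaller $k$ while the switching-lemma-style restrictions for level $k$ fix large portions of the oracle. Making these stages cohere is the entire difficulty, and it is not addressed.

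A secondary issue is your description of the $\Pi_1(\alpha)$ axioms as ``local encoding conditions relating $\alpha$-values at length $\sim f_k(|x|)$ to the relevant relativized computations at $x$'': the computations in question run in time $f_k(|x|)^{O(1)}$, so their computation histories are objects of size roughly $2^{f_k(|x|)}$, which need not exist below any polynomial bound in $x$; such conditions are therefore not $\Delta_0(\alpha)$ as written. The paper's device is different and worth adopting: express the containment as the equivalence of a fixed $\Sigma^e_k(\alpha)$-complete (via linear-time reductions) property with a $\Delta_0(\alpha)$ formula, and let $T(\alpha)$ be the set of $\Pi_1(\alpha)$ consequences of $\ido(\alpha) + \Exp$ plus these equivalences; consistency then comes from $(\bbn, A)$ once the oracle exists. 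With that formulation your second paragraph goes through, but everything still hinges on the missing oracle construction.
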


Currently,  we are only able to obtain a weaker theorem:

\begin{theorem}\label{thm:relativization}
Let $\alpha$ be a new unary relation symbol. Then for every finite fragment $\mathrm{B}(\alpha)$ of $\bsi(\alpha)$ there exists a consistent recursively axiomatized $\Pi_1(\alpha)$ theory
$T_B(\alpha) \supseteq \ido(\alpha)$ such that $T_B(\alpha) + \neg \Exp \vdash \mathrm{B}(\alpha)$.
\end{theorem}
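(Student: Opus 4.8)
The plan is to imitate the proof of Theorem~\ref{thm:full1}, replacing ``true arithmetic'' by a recursively axiomatized $\Pi_1(\alpha)$ theory manufactured from an oracle, where the oracle is provided by a separate computational lemma. That lemma — the heart of the matter — asserts that for every $k$ there is a recursive oracle $A_k$ such that $\Sigma_k\text{-}\mathrm{TIME}(2^{O(n)})^{A_k}\subseteq\linh(A_k)$, with the containment moreover effective (from a $\Sigma_k\text{-}\mathrm{TIME}(2^{O(n)})^{A_k}$ machine one can compute an equivalent $\Sigma_j\text{-}\mathrm{TIME}(O(n))^{A_k}$ machine). Since $\linh(A)\subseteq\mathrm{DTIME}(2^{O(n)})^{A}\subseteq\Sigma_k\text{-}\mathrm{TIME}(2^{O(n)})^{A}$ for every $A$, this forces $\linh(A_k)=\mathrm{DTIME}(2^{O(n)})^{A_k}=\Sigma_k\text{-}\mathrm{TIME}(2^{O(n)})^{A_k}$. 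Granting it, a relativized form of Wrathall's theorem \cite{wrathall} yields a \emph{recursive} map $\psi\mapsto\psi^{\lin}$ sending each $\pv_{k+1}(\alpha)$ formula $\psi(x,y)$ to a $\Delta_0(\alpha)$ formula $\psi^{\lin}(x,y)$ with $(\bbn,A_k)\models\forall x\,\forall y\,\forall z\,(z=2^y\rightarrow(\psi(x,z)\leftrightarrow\psi^{\lin}(x,y)))$: on the short input $\tuple{x,y}$ the number $z=2^y$ has only about $|y|\le|\tuple{x,y}|$ bits, so $\psi(x,2^y)$ is a $\Sigma_k\text{-}\mathrm{TIME}(2^{O(|\tuple{x,y}|)})^{A_k}$ property and hence lies in $\linh(A_k)$.

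Given a finite fragment $\mathrm B(\alpha)$ of $\bsi(\alpha)$, I would fix $k$ large enough that any model of $\ido(\alpha)$ with a proper end-extension to a model of $T^k_2(\alpha)$ satisfies $\mathrm B(\alpha)$ (a relativized, finite-fragment version of the fact behind Corollary~\ref{cor:full-coll}), take the corresponding $A_k$ and translation, and set $T^{+}:=\pv_{k+1}(\alpha)+\Exp+\{\forall x\,\forall y\,\forall z\,(z=2^y\rightarrow(\psi(x,z)\leftrightarrow\psi^{\lin}(x,y))):\psi\in\pv_{k+1}(\alpha)\}$. Then I would let $T_{\mathrm B}(\alpha)$ be $\ido(\alpha)$ together with all $\Pi_1(\alpha)$ sentences provable in $T^{+}$: this is recursively axiomatizable (since $T^{+}$ is recursive), extends $\ido(\alpha)$, and is consistent because $(\bbn,A_k)\models T^{+}$.

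To show $T_{\mathrm B}(\alpha)+\neg\Exp\vdash\mathrm B(\alpha)$, take $\m\models T_{\mathrm B}(\alpha)+\neg\Exp$. Since $\m$ satisfies every $\Pi_1(\alpha)$ consequence of $T^{+}$, the theory $T^{+}$ together with the $\Delta_0(\alpha)$-diagram of $\m$ is consistent — a refutation of a finite part of that diagram would be a $\Pi_1(\alpha)$ theorem of $T^{+}$ false in $\m$ — so $\m$ has a $\Delta_0(\alpha)$-elementary extension $\n\models T^{+}$. As $\m\models\neg\Exp$ there is $d\in\m$ with $2^d\notin\m$, and $2^d\in\n$ because $\n\models\Exp$. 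Let $\kk$ be the closure of $\m\cup\{2^d\}$ in $\n$ under $\pv_{k+1}(\alpha)$ functions. Then $\kk\models T^k_2(\alpha)$, $\kk$ is a proper extension of $\m$, and $\kk\supseteq_e\m$ by precisely the computation in the proof of Theorem~\ref{thm:full1}, now with $h(y)=2^y$ in place of $h(x)=2^{f(|x|)}$ and using that the translation axioms of $T^{+}$ hold in $\n$. Hence $\m$ has a proper end-extension to a model of $T^k_2(\alpha)$, so $\m\models\mathrm B(\alpha)$.

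The hard part is the complexity lemma. The obstruction is that an exponential-time computation can, in principle, read more of an oracle than a linear-time one (it can query longer strings), so $A_k$ cannot merely ``tabulate'' the $\Sigma_k\text{-}\mathrm{TIME}(2^{O(n)})$ answers; I expect one has to build it by a stage construction making the oracle extremely sparse except along a rapidly thinning sequence of ``active'' lengths, arranged so that every $\Sigma_k\text{-}\mathrm{TIME}(2^{O(n)})$ computation effectively consults it only at boundedly many active lengths that are themselves reachable within linear time, with the answers recorded there; this is in the spirit of Ferreira's circuit\,/\,relativized-$\linh$ correspondence \cite{ferreira:endextensions}. The construction cannot be made uniform in $k$: a single oracle $A$ working for all $k$ would give $\eh(A)\subseteq\linh(A)$, contradicting Proposition~\ref{prop:eh}, which relativizes — and this is exactly why one obtains only Theorem~\ref{thm:relativization} and not the Conjecture. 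A secondary, routine-but-delicate issue, to be handled as in the passage from Theorem~\ref{thm:full1} to Theorem~\ref{thm:full2}, is the bookkeeping of the exponentially large witness (the constant ``$d$''\,/\,the $\Exp$ axiom) inside a $\Pi_1$ framework, including checking that $T_{\mathrm B}(\alpha)+\neg\Exp$ is itself consistent.
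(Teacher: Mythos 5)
Your reduction of the theorem to a computational lemma is essentially the paper's own argument: the paper also fixes $k$ so that a proper end-extension to a model of $T^k_2(\alpha)$ forces $\mathrm{B}(\alpha)$, takes $T_B(\alpha)$ to be the $\Pi_1(\alpha)$ consequences of $\ido(\alpha)+\Exp+$``$\Sigma^e_k(\alpha)\subseteq\linh(\alpha)$'', gets consistency from the oracle, and builds the end-extension by relativizing Theorems \ref{thm:full1}/\ref{thm:full2}. (Two small remarks: the paper expresses the containment by the equivalence of one fixed $\Sigma^e_k(\alpha)$-complete formula with a $\Delta_0(\alpha)$ formula, so it needs neither a recursive oracle nor an effective translation for recursive axiomatizability --- your stronger demands are avoidable; and evaluating a $\pv_{k+1}(\alpha)$ formula on $2^y$ gives a priori only a $\mathrm{DTIME}(2^{O(n)})^{\Sigma^p_k(\alpha)}$ property, so you may need the oracle at a level one higher than $k$, which is harmless since the lemma is quantified over all $k$.)

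The genuine gap is that the heart of the matter --- your ``complexity lemma'', which is exactly the paper's Lemma \ref{lem:oracle} --- is not proved but only gestured at. Your sketch (a stage construction with a sparse oracle concentrated on rapidly thinning ``active'' lengths, in the spirit of Ferreira) is essentially the known technique for $k=0,1$ (Dekhtyar, Heller), but it does not engage with the actual obstacle for $k\ge 2$: a $\Sigma^e_k(\alpha)$ predicate has $k$ alternating blocks of exponentially long quantifiers that themselves query the oracle adaptively, so one cannot simply ``record the answers'' at linear-time-reachable locations; the recorded bits must be kept consistent with a computation whose truth value depends on the very oracle bits still being chosen, across alternations. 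The paper overcomes this with a variant of H{\aa}stad's switching lemma (Lemma \ref{lem:switching}, proved by a Razborov-style injection argument adapted to blocks of varying sizes), applied $k$ times to random restrictions of the oracle so that the $\Sigma^e_k(\alpha)$-complete property collapses to an exponential-height decision tree while a designated $\Delta_0(\alpha)$ formula with $k+1$ alternating bounded quantifiers over $\alpha$ retains free blocks in which to encode the answer; only then does Heller's $\mathrm{E}\subseteq\mathrm{NLIN}$ construction finish the job. None of this machinery, nor any substitute for it, appears in your proposal, so as it stands the proof is incomplete precisely at the step you yourself identify as the hard part. (Your observation that the construction cannot be uniform in $k$, by the relativized Proposition \ref{prop:eh}, is correct and matches the paper's discussion of why one gets the theorem rather than the conjecture.)
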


Already this result contrasts with the situation without the $\neg \Exp$ axiom.

\begin{proposition}
Let $\alpha$ be a new unary relation symbol. There is a finite fragment $\mathrm{B}(\alpha)$ of $\bsi(\alpha)$ such that for every set $S(\alpha)$ of $\Pi_2(\alpha)$ sentences which is consistent with $\ido(\alpha) + \Exp$ (in particular, for every consistent extension of $\ido(\alpha)$ by a set of $\Pi_1(\alpha)$ sentences), $S(\alpha) \not \vdash \mathrm{B}(\alpha)$.
\end{proposition}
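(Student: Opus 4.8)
I would take $B(\alpha)$ to be a single instance of $\bsi(\alpha)$, and it has to be one that genuinely mentions $\alpha$: if $\varphi$ is oracle\nobreakdash-free then the ``hard half'' of its instance, the uniform\nobreakdash-bound sentence $\forall u\,\exists w\,\ab{x}{u}\eb{y}{w}\varphi(x,y)$, is a $\Pi_2$ arithmetical sentence which (when $\forall x\,\exists y\,\varphi(x,y)$ holds in $\bbn$) is true, and otherwise $\neg\exists y\,\varphi(\underline{x_0},y)$ is a true $\Pi_1$ sentence for some standard $x_0$ that kills the premise; so every oracle\nobreakdash-free instance already follows from $\ido(\alpha)+\Exp$ together with the true $\Pi_2$ arithmetical sentences, a $\Pi_2(\alpha)$ theory consistent with $\ido(\alpha)+\Exp$. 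A suitable $B(\alpha)$ is the instance for a $\Delta_0(\alpha)$ formula $\varphi_0(x,y)$ in which the oracle selects, for each $x$, which of the values $\exp_k(2)$ count as witnesses (recall that the graph of $k\mapsto\exp_k(2)$ is $\Delta_0$, since the witnessing tower $2,\dots,\exp_k(2)$ has a code polynomial in its last entry); the selection is padded so that no $\Pi_1(\alpha)$ or $\Pi_2(\alpha)$ axiom can uniformly install small witnesses, so that the least witness of an $x$ can be pushed to an arbitrarily high iterated\nobreakdash-exponential level while still existing in a model closed under $\Exp$ but not under all iterated exponentials. Making $\varphi_0$ resist \emph{every} admissible $S(\alpha)$ is part of the work.

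Given $S(\alpha)$ as in the statement I may assume $S(\alpha)\supseteq\ido(\alpha)+\Exp$ (adjoining these preserves $\Pi_2(\alpha)$\nobreakdash-axiomatisability, as $\ido$ is $\Pi_2$\nobreakdash-axiomatisable) and, by compactness, reduce to the following: for every $\Pi_2(\alpha)$ sentence $\sigma=\forall x\,\exists y\,\psi(x,y,\alpha)$ with $\ido(\alpha)+\Exp+\sigma$ consistent, the theory $\ido(\alpha)+\Exp+\sigma+\neg B(\alpha)$ has a model. Naming the bound of the instance by a fresh constant $u^*$, the sentence $\neg B(\alpha)$ is itself $\Pi_2(\alpha)$: it is $\ab{x}{u^*}\exists y\,\varphi_0(x,y)$ (which is $\Pi_2(\alpha)$) conjoined with the $\Pi_1(\alpha)$ sentence $\forall w\,\es{x}{u^*}\ab{y}{w}\neg\varphi_0(x,y)$.

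To build the model I would run a forcing construction that produces the arithmetic structure and the oracle together. The target $\m$ will be an $\exp$\nobreakdash-closed cut of a model of $\mathrm{PA}(\alpha)$, with a distinguished nonstandard $c\in\m$ sitting at the top of the cut $\{k:\exp_k(2)\text{ exists in }\m\}$, so that $\exp_c(2),\exp_{c+1}(2),\exp_{c+2}(2),\dots$ are cofinal in $\m$. The conditions are coded finite approximations to the oracle together with a finite list of commitments of $\sigma$\nobreakdash-witnesses $y_x$; one family of dense sets forces $\sigma$ (using that $\sigma$ is consistent with $\ido(\alpha)+\Exp$, so the demand ``witness $\psi(x,\cdot,\alpha)$ at the next $x$'' can always be met), another forces the oracle to be piecewise coded (so that $\ido(\alpha)$, hence $\Exp$, holds in $\m$), and a third — the sets $D_n$, $n\in\bbn$ — asks to pick some $x\le c$ not yet committed and to decree that $\varphi_0(x,\cdot)$ first has a witness at level $c+n$. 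Meeting all of them yields $\m\models\ido(\alpha)+\Exp+\sigma$ in which the least $\varphi_0$\nobreakdash-witnesses of the $x\le c$ are cofinal in $\m$, so $B(\alpha)$ fails at $u^*:=c$.

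The main obstacle is the density of the $D_n$: one must show that a partial oracle already meeting $\sigma$'s demands can always be extended so as both to decree a fresh late\nobreakdash-first\nobreakdash-witness for some $x\le c$ \emph{and} to leave $\sigma$ satisfiable at all the (unboundedly many) larger inputs — i.e.\ that committing one more section $k\mapsto\alpha(\langle x,k\rangle)$ does not over\nobreakdash-constrain the $\alpha$\nobreakdash-values $\psi$ needs elsewhere. This is exactly the step where $\Exp$, rather than $\neg\Exp$, is used: the collection failure demands witnesses at arbitrarily high iterated\nobreakdash-exponential levels, possible only because the model is exponentially large, and that same room makes the sections we tamper with live at positions so high that $\psi$ can be re\nobreakdash-witnessed ``around'' them at the large inputs where it is probed. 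Under $\neg\Exp$ there is no such room, in agreement with Theorem~\ref{thm:relativization}, which shows that $\neg\Exp$ together with a suitable $\Pi_1(\alpha)$ theory \emph{does} prove every finite fragment of $\bsi(\alpha)$.
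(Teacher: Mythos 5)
There is a fatal conceptual problem at the very first step: no single \emph{parameter-free} instance of $\bsi(\alpha)$ can serve as $\mathrm{B}(\alpha)$, no matter how cleverly the oracle is woven into $\varphi_0$. The adversary $S(\alpha)$ is allowed to contain the $\Pi_1(\alpha)$ sentence $\forall z\,(\alpha(z)\leftrightarrow\delta(z))$ for any fixed $\Delta_0$ formula $\delta$, together with the whole true $\Pi_2(\alpha)$ theory of $(\bbn,\delta^{\bbn})$; this $S(\alpha)$ is consistent with $\ido(\alpha)+\Exp$ since the standard model expanded by $\delta^{\bbn}$ satisfies it. Under such an $S(\alpha)$ your $\varphi_0(x,y)$ collapses to an oracle-free, parameter-free $\Delta_0$ formula $\varphi_0^\delta(x,y)$, and then exactly the argument of your own first paragraph applies: if $(\bbn,\delta^{\bbn})\models\forall x\,\exists y\,\varphi_0^\delta(x,y)$, the true $\Pi_2$ sentence $\forall u\,\exists w\,\ab{x}{u}\eb{y}{w}\varphi_0^\delta(x,y)$ is in $S(\alpha)$ and, with the pinning axiom, yields the instance by pure logic; if the premise fails at a standard $x_0$, the true $\Pi_1$ sentence $\forall y\,\neg\varphi_0^\delta(\num{x_0},y)$ plus a true $\Pi_2$ sentence handling $u<\num{x_0}$ do the job. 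So this $S(\alpha)$ \emph{proves} your $\mathrm{B}(\alpha)$, and your forcing has no room to operate (the oracle is completely determined, so the dense sets $D_n$ cannot be met). The moral is that the parameter-free/parametered distinction, which you noted and then set aside, is the whole point: unprovability of collection from $\Pi_2$-sound data must come from a \emph{nonstandard parameter}, not from oracle freedom. Even granting the choice of $\varphi_0$, the two hardest steps of your plan --- constructing $\varphi_0$ so as to ``resist every admissible $S(\alpha)$'', and the density of the $D_n$ while simultaneously forcing an arbitrary consistent $\Pi_2(\alpha)$ sentence $\sigma$ over a fixed ambient structure --- are only announced, not proved, and the consistency of $\sigma$ with $\ido(\alpha)+\Exp$ by itself gives no reason that $\sigma$ can be witnessed inside the particular cut-with-generic-oracle you are building.

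For comparison, the paper's proof is a short Paris--Kirby argument and needs no forcing: since $S(\alpha)$ is consistent with $\ido(\alpha)+\Exp$, take a nonstandard $\m\models\ido(\alpha)+\Exp+S$, fix a nonstandard $a\in\m$, and let $\kk$ be the set of elements $\Sigma_1(\alpha)$-definable in $\m$ from $a$. Then $\kk\preccurlyeq_{\Sigma_1(\alpha)}\m$, so $\kk$ satisfies every $\Pi_2(\alpha)$ sentence true in $\m$ (in particular all of $S$, $\ido(\alpha)$ and $\Exp$), while the usual argument shows $\kk\not\models\bsi(\alpha)$ --- the failing instance involves a nonstandard parameter. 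One then takes $\mathrm{B}(\alpha)$ to be the finite fragment axiomatizing $\bsi(\alpha)$ over $\ido(\alpha)+\Exp$ (such a fragment exists and necessarily carries parameters); if some admissible $S$ proved it, $\kk$ would satisfy $\bsi(\alpha)$, a contradiction. Your closing intuition about $\Exp$ providing ``room'' is not wrong in spirit, but it does not repair the choice of $\mathrm{B}(\alpha)$ or supply the missing density arguments.
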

\begin{proof}
Take a nonstandard model $\m \models {\ido(\alpha)} + {\Exp} + {S}$, a nonstandard element $a \in \m$, and let $\kk$ consist of those elements of $\m$ which are $\Sigma_1(\alpha)$-definable with parameter $a$. By the usual Paris-Kirby argument,
$\kk \not \models \bsi(\alpha)$, but $\kk \preccurlyeq_{\Sigma_1(\alpha)} \m$, so in particular $\kk \models S$. This completes the proof, since $\bsi(\alpha)$ is finitely axiomatizable over $\ido(\alpha) + \exp$. 
\end{proof}

Our proof of Theorem \ref{thm:relativization} is based on the following computational lemma.

\begin{lemma}\label{lem:oracle}
For every $k$, there is an oracle $\alpha$ such that $\Sigma^e_k(\alpha) \subseteq \linh(\alpha)$.
\end{lemma}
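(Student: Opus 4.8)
The plan is to build the oracle $\alpha$ by a standard finite-injury-free diagonalization (a single ``wait-and-see'' stage construction), collapsing $\Sigma^e_k(\alpha)$ into $\linh(\alpha)$ by coding answers to exponential-time computations into the oracle at exponentially long but linearly addressable locations. Fix an enumeration $M_0, M_1, \ldots$ of $\Sigma_k$-oracle machines running in time $2^{cn}$ for the various constants $c$; we want, for each such $M_i$, a $\linh(\alpha)$ predicate deciding $L(M_i^\alpha)$. The key idea: reserve a region $R_i$ of the oracle consisting of strings whose length encodes both $i$ and an input length $n$, and whose remaining bits (of which there are linearly many in the \emph{description} of the input, since a block of length $\Theta(n)$ suffices to name an input of length $n$ once $n$ is fixed by the length) will hold the bit ``$x \in L(M_i^\alpha)$''. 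A linear-time machine, given $x$ with an appropriate $\Sigma_1$ guess of the address, queries that single oracle bit; so membership in $L(M_i^\alpha)$ becomes a $\linh(\alpha)$ predicate provided the coded bit is correct.

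The construction proceeds in stages, at stage $\langle i,n\rangle$ deciding the membership bits for all inputs of length $n$ to $M_i$ simultaneously. The obstacle one must finesse is circularity: $M_i^\alpha$ itself queries $\alpha$, including possibly the very region $R_i$ we are about to set, so ``$x \in L(M_i^\alpha)$'' is not determined until $\alpha$ is. The standard resolution is a length/time separation: ensure that on inputs of length $n$, $M_i$ runs in time $< 2^{c_i n}$ and therefore cannot query any string whose length exceeds $c_i n$; meanwhile the coding region for $M_i$ at input-length $n$ uses strings of length strictly between the region used for smaller stages and $2^{c_i n}$ — concretely, one can dovetail so that stage $\langle i, n\rangle$ owns all oracle strings of lengths in an interval $[N_{i,n}, N_{i,n+1})$ with $N_{i,n}$ growing fast enough that no previously-considered machine on any previously-considered input can reach into this interval, while the machine $M_i$ on length-$n$ inputs \emph{can} reach it only within a controlled sub-block. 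One then sets all the oracle bits outside the designated coding sub-block (say, to $0$), which fixes the behavior of $M_i^\alpha$ on all length-$n$ inputs — because the outcome depends only on oracle strings shorter than the coding sub-block or in it — no wait: one must be slightly more careful and instead first decide, with the portion of $\alpha$ already committed plus the convention that all not-yet-committed bits outside the current coding block are $0$, the value $b_x = [x \in L(M_i^\alpha)]$ for each length-$n$ input $x$, and only \emph{then} write $b_x$ into the coding sub-block; since the coding sub-block was chosen disjoint from the query region of $M_i$ on length-$n$ inputs, writing into it does not disturb the already-computed $b_x$.

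With the combinatorics of the intervals set up correctly, the verification is routine: (1) every oracle string is assigned to exactly one stage, so $\alpha$ is well-defined; (2) for each $i$, there is a $\Delta_0(\alpha)$ (equivalently $\linh(\alpha)$) formula $\varphi_i(x)$ asserting ``the length of $x$ is $n$ for some $n$, the coding string $s_{i,n,x}$ (whose address is computable in time linear in $|x|$ once $n$ is extracted from $|x|$, or guessed existentially and verified) lies in $\alpha$'', and by construction $\varphi_i(x) \iff x \in L(M_i^\alpha)$; (3) since every $\Sigma^e_k(\alpha)$ language is $L(M_i^\alpha)$ for some $i$, we conclude $\Sigma^e_k(\alpha) \subseteq \linh(\alpha)$. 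The main obstacle, as indicated, is purely bookkeeping: laying out the length intervals $\{N_{i,n}\}$ so that (a) distinct stages are disjoint, (b) the coding sub-block for stage $\langle i,n\rangle$ is beyond the reach of $M_i$ on length-$n$ inputs yet (c) still has an address computable/verifiable in time linear in $n$, and (d) the whole schedule is monotone enough that committing later bits never retroactively changes an earlier stage's computations. None of this requires anything beyond the classical nondeterministic time hierarchy intuition and careful indexing, so I expect the proof to be short once the interval scheme is pinned down.
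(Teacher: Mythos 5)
Your interval scheme cannot exist, and this is the heart of the matter rather than bookkeeping. A $\linh(\alpha)$ predicate is computed in time $O(n)$ on inputs of length $n$, so the coding string $s_{i,n,x}$ it queries must have length $O(n)$. But a $\Sigma_k$ machine running in time $2^{c_i n}$ can query \emph{every} string of length $O(n)$ (indeed any string of length up to $2^{c_i n}$ -- your claim that time $2^{c_i n}$ limits queries to length $c_i n$ confuses the running time with the logarithm of the running time; even a deterministic machine has time to exhaustively query all $2^{cn}$ strings of length $cn$). So requirements (b) and (c) of your schedule are mutually contradictory: any location addressable by the linear-time side lies squarely inside the query region of $M_i$ on length-$n$ inputs, and no choice of the lengths $N_{i,n}$ separates them. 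This is precisely why even the base case $\mathrm{E}(\alpha) \subseteq \mathrm{NLIN}(\alpha)$ (Heller) is a delicate theorem and not a padding/coding triviality, and why the circularity you flag cannot be ``finessed by a length/time separation''.

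The paper's proof confronts this head-on. For $k\ge 2$ it fixes a $\Sigma^e_k(\alpha)$-complete property $\varphi^\alpha(a)$ with quantifier bounds $2^{N^{1/\ell}}$ (where $N = 2^{|a|}$), and applies a variant of H{\aa}stad's switching lemma $k$ times to a random partial assignment of the oracle, so that after the restrictions $\varphi^\alpha(a)$ is decided by a decision tree of height only $N$ over the still-unset oracle bits, while the target $\Delta_0(\alpha)$ formula $\forall y_1\,\exists y_2 \cdots \alpha(a,y_1,\ldots)$ collapses to a conjunction over a reserved set of $N^2$ unset positions per input $a$. Only then does a Heller-style stage-by-stage argument work: deciding $\varphi^\alpha(a)$ for $a = 0,1,\ldots$ touches at most $N$ oracle bits per input, so at most $N\log N + O(1)$ of the $N^2$ reserved positions for $a$ get spoiled by earlier stages, and at least one remains free to encode the answer. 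In other words, the resolution is not to place the code out of the machine's reach (impossible), but to argue via the switching lemma that the exponential-time computation is \emph{insensitive} to almost all of the region where the code must live. Your proposal is missing this idea entirely, and without it the construction does not go through.
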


We first show how the theorem follows from the lemma and then devote the remainder of the section to a proof of the lemma. 

Assume that Lemma \ref{lem:oracle} holds and let $\mathrm{B}(\alpha)$ be a finite fragment of $\bsi(\alpha)$. Let $k$ be such that every model of $\ido(\alpha)$ with a proper end-extension to a model of $T^k_2(\alpha)$ satsifies $\mathrm{B}(\alpha)$. 
Let $T_B(\alpha)$ consist of the $\Pi_1(\alpha)$ consequences of the theory 
$\ido(\alpha) + \Exp + \textrm{``}{\Sigma^e_k(\alpha) \subseteq \linh(\alpha)}\textrm{''}$, where the containment is expressed as the equivalence of some fixed formula defining a property which is $\Sigma^e_k(\alpha)$-complete via linear-time reductions with a $\Delta_0(\alpha)$ formula. By Lemma \ref{lem:oracle}, $T_B(\alpha)$ is consistent, and since it has a recursively enumerable axiomatization it is also recursively axiomatized.

By relativizing the proof of Theorem \ref{thm:full2}, which presents no significant obstacles, we show that every model of $T_B(\alpha) + \neg \Exp$ has a proper end-extension to a model of $T^k_2(\alpha)$. Thus, $T_B(\alpha) + \neg \Exp \vdash \mathrm{B}(\alpha)$, which completes the proof of Theorem \ref{thm:relativization} from Lemma \ref{lem:oracle}.

\begin{proof}[Proof of Lemma \ref{lem:oracle}] For $k = 0, 1$, the result is known \cite{dekhtyar:relativization, heller:relativized}. For $k \ge 2$, our strategy will be to apply a variant of ``H{\aa}stad's second switching lemma'' (Lemma 6.3 in \cite{hastad:thesis}) $k$ times. As a result, we will partially determine the oracle $\alpha$ in such a way that a given $\Sigma^e_k(\alpha)$ property will behave roughly like a deterministic exponential-time property (more precisely, an exponential-height decision tree) relative to the undetermined part of $\alpha$, whereas a well-chosen $\linh$ property will behave like a nondeterministic linear-time property. At that point, it will remain to repeat the construction of an oracle relative to which $\mathrm{E} \subseteq \mathrm{NLIN}$ as presented in \cite{heller:relativized}.

We describe in detail the case of $k =2$. We choose some reasonable coding of tuples and  assume that all inputs to $\alpha$ are quadruples of the form $\tuple{a, y_1, y_2, y_3}$, where 
$y_1 , y_2 < 2^{4|a|}$, $y_3 < 2^{2|a|}$. We also use the convention that whenever $a$ appears in the proof, $N$ stands for $2^{|a|}$, and whenever $b$ appears in the proof, $M$ stands for $2^{|b|}$. Note that a computation on input $a$ is linear-time if it takes time linear in $|a|$ and exponential-time (as understood in the definition of $\eh$) if it takes time polynomial in $N$.

There is a problem in $\Sigma_2$-$\mathrm{TIME}(2^{(1/110)n})(\alpha)$ which is complete for $\Sigma^e_2(\alpha)$ via linear-time reductions. Therefore, it suffices to construct $\alpha$ so that the property $\varphi^\alpha(a)$:
\[ \es{x_1}{2^{N^{1/110}}}\as{x_2}{2^{N^{1/110}}}\psi^\alpha(a,x_1,x_2),\]
where $\psi^\alpha$ makes at most $N^{1/110}$  queries to $\alpha$, is for sufficiently large $a$ equivalent to
\[\as{y_1}{N^4}\es{y_2}{N^4}\as{y_3}{N^2}\alpha(a,y_1,y_2,y_3).\]

We think of each instance $\alpha(b,y_1,y_2,y_3)$ as a propositional variable, which is set to $1$ exactly if $\tuple{b,y_1,y_2,y_3} \in \alpha$. For each fixed $a$, $\varphi^\alpha(a)$ can be viewed as a propositional formula in the variables $\alpha(b,y_1,y_2,y_3)$. The formula is a disjunction of $2^{N^{1/110}}$ CNF's, each of which is a conjunction of disjunctions of at most ${N^{1/110}}$ atoms (an ${N^{1/110}}$-CNF). 

We will refer to a partial assignment to the propositional variables as a \emph{restriction}. We choose a random restriction $\rho$ by the following process:
\begin{itemize}
\item divide all possible inputs to $\alpha$ into blocks of the form $\tuple{b, y_1, y_2, \cdot}$, where each block consists of $M^2$ elements corresponding to the possible values of $y_3$,
\item for each input $\tuple{b, y_1, y_2, y_3}$ independently, set $\alpha(b, y_1, y_2, y_3)$ to $1$ 
with probability $1 - \frac{1}{M}$ and ``set it to $*$'' (in other words, leave it unassigned, or ``starred'')
with probability $\frac{1}{M}$,
\item for each block $\tuple{b, y_1, y_2, \cdot}$ in which not all the values have been set to $1$, make it a ``$0$-block'' (setting 
$\tuple{b, y_1, y_2, y_3}$ to $0$ for all $y_3$ for which this was previously set to $*$) with probability $1 - \frac{1}{M}$ and make it a ``$*$-block'' (keeping the variables currently set to $*$ unassigned) with probability $\frac{1}{M}$.
\end{itemize} 

The restriction $\rho$ is then extended to $g(\rho)$ which additionally sets to $1$ all starred variables in a $*$-block except the one with the smallest value of $y_3$. 

The following variant of H{\aa}stad's switching lemma will guarantee the existence of a restriction $\rho$ with some desirable properties. The proof of the lemma is deferred until Section \ref{sec:switching}.

\begin{lemma}\label{lem:switching}
Let $0 < \delta, \epsilon \le 1$ such that $12 \delta < \epsilon$. Assume $N$ is large enough and let $\psi$ be an $N^\delta$-DNF. Then with probability at least $1 - 2^{-N^\epsilon}$, $\psi|_{g(\rho)}$ can be decided by a decision tree of height less than
$N^\epsilon$. 
\end{lemma}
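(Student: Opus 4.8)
The plan is to run the labelling (encoding) argument behind the standard proof of H{\aa}stad's switching lemma, in the block-restriction form corresponding to Lemma~6.3 of \cite{hastad:thesis}. Write $s:=N^\epsilon$ and fix an enumeration $t_1,t_2,\ldots$ of the terms of $\psi$. For a partial assignment $\pi$, let $T(\psi,\pi)$ be the \emph{canonical decision tree}: if $\pi$ falsifies every term, output $0$; otherwise take the least $i$ with $t_i$ not falsified by $\pi$, query in a fixed order the (at most $N^\delta$) variables of $t_i$ left unset by $\pi$, and recurse below each answer. Since $T(\psi,\pi)$ decides $\psi|_\pi$, it suffices to show that the probability that $T(\psi,g(\rho))$ has a branch of length at least $s$ is at most $2^{-N^\epsilon}$. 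Only two features of the distribution will be used: in $g(\rho)$ each $*$-block contains exactly one starred variable, so the free variables of $g(\rho)$ occupy pairwise distinct blocks and a branch of $T(\psi,g(\rho))$ of length $\ge s$ queries $\ge s$ distinct blocks; and, independently across blocks, switching a block from ``$*$-block with stage-one star-set $S$'' to ``$0$-block with stage-one star-set $S$'' multiplies its probability by a factor $\Theta(N)$, namely the ratio of the two stage-two branching probabilities $1-1/N$ and $1/N$.

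The core of the proof is a map $\rho\mapsto(\rho^*,\ell)$ on the ``bad'' restrictions, with $\ell$ in a label set $L$ of size at most $\bigl(O(N^\delta)\bigr)^s$, which is injective in the sense that $(\rho^*,\ell)$ determines $\rho$, and which satisfies $\Pr[\rho]\le\bigl(O(N^{-1})\bigr)^s\Pr[\rho^*]$. For bad $\rho$, fix the leftmost length-$s$ branch $B$ of $T(\psi,g(\rho))$; walking along $B$ one passes through segments, the $j$-th lying inside a term $t_{i_j}$ whose free variables $B$ queries, and these occupy distinct $*$-blocks of $g(\rho)$. Obtain $\rho^*$ from $\rho$ by turning each such $*$-block into the $0$-block with the same stage-one star-set, so that by the fact above $\Pr[\rho]\le\bigl(O(N^{-1})\bigr)^s\Pr[\rho^*]$; let $\ell$ record, segment by segment, which literals of $t_{i_j}$ were the ones queried by $B$ together with the bit $B$ assigns to each. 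To recover $\rho$ from $(\rho^*,\ell)$ one replays the walk: $\rho^*$ together with the part of $B$ reconstructed so far determines the current term $t_{i_j}$ as the least non-falsified one, $\ell$ names its active literals and their values, each active literal reveals a $0$-block to be re-opened into the $*$-block with the same star-set --- with its free variable, the minimal element of that star-set read off from $\rho^*$, set to the recorded bit --- and one moves on. As at most $N^\delta$ literals of a term can be active and the active literals along $B$ number exactly $s$, the label $\ell$ takes one of at most $\bigl(O(N^\delta)\bigr)^s$ values.

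Assembling these in the standard way, and using that $(\rho^*,\ell)\mapsto\rho$ is injective and $\sum_{\rho^*}\Pr[\rho^*]\le1$,
\[
\Pr\bigl[\,T(\psi,g(\rho))\text{ has a branch of length}\ge s\,\bigr]\;\le\;|L|\cdot\max_\rho\frac{\Pr[\rho]}{\Pr[\rho^*]}\;\le\;\bigl(O(N^{\delta-1})\bigr)^{s}.
\]
Since $12\delta<\epsilon\le1$ forces $\delta<1$ with a fixed amount of room to spare, for $N$ large the right-hand side is at most $2^{-s}=2^{-N^\epsilon}$, as required.

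I expect the main obstacle to be the bookkeeping that makes $\rho\mapsto(\rho^*,\ell)$ genuinely injective while keeping $|L|$ down to $\bigl(O(N^\delta)\bigr)^s$: because $\rho$ is built in two stages and $g$ deterministically fills in all but one star of each $*$-block, the status of a variable is entangled with that of its whole block. One must check that each active literal of a term re-opens exactly one $0$-block (here it is essential that $g$ leaves precisely one star per $*$-block), that the replay's choice of ``least non-falsified term'' really coincides with the one made in the actual run even though the not-yet-reconstructed path-blocks sit in $\rho^*$ with an inconsistent $0/1$ pattern, and --- crucially --- that no extra $\Theta(\log N)$ bits per variable creep into $\ell$, which would swamp the $\Theta(N)$ probability gain per block and render the bound vacuous. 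Managing exactly these points is what forces one to follow H{\aa}stad's treatment of block restrictions rather than the plain switching lemma, and it is presumably also where the constant $12$ in the hypothesis originates.
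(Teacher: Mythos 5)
There is a genuine gap, and it is exactly the point where this lemma differs from the textbook switching lemma. Your probability accounting assumes that turning a queried $*$-block into a $0$-block gains a factor $\Theta(N)$, ``the ratio of the two stage-two branching probabilities $1-1/N$ and $1/N$''. But in the distribution at hand the branching probabilities of a block $\tuple{b,y_1,y_2,\cdot}$ are $1-\frac 1M$ and $\frac 1M$ with $M=2^{|b|}$, and $\psi$ is an arbitrary $N^\delta$-DNF in the variables $\alpha(b,y_1,y_2,y_3)$: it may (and in the intended application does) mention blocks with $M$ unrelated to $N$, including very small $M$. For such a block the gain from flipping a $*$-block to a $0$-block is only $M-1$ (for $M=2$ it is $1$, i.e.\ no gain at all), so the inequality $\Pr[\rho]\le\bigl(O(N^{-1})\bigr)^{s}\Pr[\rho^*]$ is simply false when the long branch queries small blocks, and the concluding bound $\bigl(O(N^{\delta-1})\bigr)^{s}$ does not follow. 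A telltale sign is that your argument never uses the hypothesis $12\delta<\epsilon$ --- only $\delta<1$ --- whereas any correct proof must, since even for the ``large'' blocks the available gain per change is governed by the block size threshold, not by $N$.

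The paper's proof repairs precisely this: blocks are split into small ($M<N^{\epsilon/12}$) and large. The canonical tree first queries by brute force \emph{all} variables starred in $\rho$ that lie in small blocks --- there are at most $N^{\frac{11}{12}\epsilon}<N^\epsilon/2$ of them, so this costs less than half the height budget --- and only then runs the Razborov-style walk/encoding on a residual branch of length $N^\epsilon/2$, whose queried blocks are all large. For large blocks each change of a $*$ into a $1$ or of a $*$-block into a $0$-block boosts the probability by at least $N^{\epsilon/12}-1$, while the per-query label information is about $N^\delta$ (plus the code for the brute-force answers $\tau$ and the $\gamma$-data, contributing $3^{N^\epsilon/2}$ and $2^{N^{\delta+\epsilon}/2}$ factors), and the final comparison $\bigl(O(N^\delta)\bigr)^{N^\epsilon/2}\cdot\bigl(N^{\epsilon/12}-1\bigr)^{-N^\epsilon/2}\le 2^{-N^\epsilon}$ is exactly where $12\delta<\epsilon$ enters. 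Your encoding skeleton (canonical tree, flip queried blocks to $0$-blocks, label by positions within the current term, replay to invert) is fine as far as it goes, but without the small/large split and the $M$-dependent gain analysis it does not prove the stated lemma; with them, it becomes essentially the paper's argument.
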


The probability that a random $\rho$ sets all $M^2$ variables in a given block $\tuple{b, y_1, y_2, \cdot}$ to $1$ is $(1 - \frac{1}{M})^{M^2}$, thus exponentially small in $M$. Similarly, by Chernoff bounds, the probability that fewer than $M^2$ of the $M^4$ blocks $\tuple{b, y_1, y_2, \cdot}$ for a given pair $\tuple{b,y_1}$ become $*$-blocks under $\rho$ is exponentially small in $M$. For any given $M$, there are only $O(M^9)$ triples $\tuple{b, y_1, y_2}$ and pairs $\tuple{b,y_1}$ with $2^{|b|} = M$;
and for any given $N$, there are no more than $N$ inputs $a$ with $2^{|a|} = N$.
Therefore, by Lemma \ref{lem:switching} with $\delta = 1/110, \epsilon = 1/9$, there exists a restriction $\rho$ such that:
\begin{itemize}
\item for each sufficiently large $b$ and each $\tuple{b, y_1, y_2}$ there is at least one $y_3$ such that $\alpha(b,y_1,y_2,y_3)|_\rho \neq 1$,
\item for each sufficiently large $b$ and each $\tuple{b, y_1}$ there are at least $M^2$ values of $y_2$ such that $\tuple{b, y_1,y_2}$ is a $*$-block under $\rho$,
\item for each sufficiently large $a$ and each $x_1 < 2^{N^{1/110}}$, the formula 
\[(\as{x_2}{2^{N^{1/110}}}\psi^\alpha(a,x_1,x_2))|_{g(\rho)}\] can be decided by a decision tree of height
$N^{1/9}$.
\end{itemize}

Take some $\rho$ with these properties and assign 0/1 values to variables $\alpha(b, y_1, y_2, y_3)$ according to $g(\rho)$. For each sufficiently large $a$, the formula $\varphi^\alpha(a)|_{g(\rho)}$, that is,
\[\es{x_1}{2^{N^{1/110}}}[\as{x_2}{2^{N^{1/110}}}\psi^\alpha(a,x_1,x_2)]|_{g(\rho)},\]
becomes an $N^{1/9}$-DNF. Extend $g(\rho)$ further to a restriction $h(\rho)$  by turning some $*$-blocks into $0$-blocks so that for each $\tuple{b, y_1}$ with $b$ sufficiently large there are exactly $M^2$ values of $y_2$ for which $\tuple{b, y_1, y_2, \cdot}$ remains a $*$-block. In this way,  
\[[\as{y_1}{N^4}\es{y_2}{N^4}\as{y_3}{N^2}\alpha(a,y_1,y_2,y_3)]|_{h(\rho)}\]
becomes
\[\bigwedge_{y_1 < N^4}\bigvee_{y_2 \in S(a,y_1)} \alpha(a,y_1,y_2,y_3(a,y_1,y_2)),\]
where the set $S(a,y_1)$ consists of exactly $N^2$ values of $y_2$, and for each such value, $y_3(a,y_1,y_2)$ is the unique $y_3$ for which  $\alpha(a,y_1,y_2,y_3)$ has not been set to $1$.  

We think of inputs to $\alpha|_{h(\rho)}$ as triples $\tuple{b,y_1,y_2}$ (since $\alpha(b,y_1,y_2,y_3)$ can be $0$ for at most one value of $y_3$). We divide the tuples into blocks of the form $\tuple{b,y_1,\cdot}$, so that a sufficiently large $b$ corresponds to exactly $M^4$ blocks containing exactly $M^2$ variables each.  We define a random distribution on restrictions $\hat \rho$ similarly as before, but with $\alpha$ replaced by $\alpha|_{h(\rho)}$, the old blocks replaced by the new blocks, and with the roles of $0$ and $1$ interchanged. In this setting, we can prove an analogue of Lemma \ref{lem:switching} in which the requirement $12\delta < \epsilon$ can be weakened to $8 \delta < \epsilon$ (the reason for the last change is that for every $b$ there are now $M^6$ variables instead of $M^{10}$). Applying this lemma with $\delta = 1/9$ and $\epsilon = 1$, we get a restriction $\hat\rho$ such that for each sufficiently large $a$ the formula
$\varphi^\alpha(a)|_{h(\rho)g(\hat\rho)}$ can be decided by a decision tree of height $N$, and extend $g(\hat\rho)$ to $h(\hat\rho)$ so that 
\[[\as{y_1}{N^4}\es{y_2}{N^4}\as{y_3}{N^2}\alpha(a,y_1,y_2,y_3)]|_{h(\rho)h(\hat \rho)}\]
becomes
\[\bigwedge_{y_1 \in S(a)} \alpha(a,y_1, y_2(a,y_1), y_3(a,y_1)),\]
where $S(a)$ is a set of size exactly $N^2$.

We then complete the construction by copying the proof of Theorem 6 of \cite{heller:relativized}. For $a = 0, 1, \dots$, we first find the value of $\varphi^\alpha(a)$ by answering $1$ to each query to $\alpha$ which has not been
already determined by $h(\rho)h(\tilde\rho)$ and the procedure for lower values of $a$. Assuming $a$ is large enough, this involves answering at most $N$ queries, because $\varphi^\alpha(a)|_{h(\rho)h(\hat\rho)}$  can be decided by a height-$N$ decision tree. Afterwards, of the $N^2$ variables 
$\alpha(a,y_1, y_2(a,y_1), y_3(a,y_1))$ remaining unset after $h(\rho)h(\tilde\rho)$, at most $aN + O(1) \le N\log N + O(1)$ have been set (to $1$) during the procedure for $0, 1, \ldots, a-1$, so at least one remains unset. If $\varphi^\alpha(a)$ is false, we choose one such variable and set it to $0$, setting all other such variables to $1$. If $\varphi^\alpha(a)$ is true, we set all such variables to $1$.
In this way, we guarantee that $\psi^\alpha(a)$ is equivalent to 
\[\as{y_1}{N^4}\es{y_2}{N^4}\as{y_3}{N^2}\alpha(a,y_1,y_2,y_3)\]
for every large enough value of $a$.

This completes the proof of Lemma \ref{lem:oracle} for the case $k=2$. The case for general $k$ is very similar: for a sufficiently large constant $\ell$, we take a property $\varphi^\alpha(a)$ which is complete for $\Sigma^e_k$ via linear-time reductions and can be written as 
\[ \es{x_1}{2^{N^{1/\ell}}}\as{x_2}{2^{N^{1/\ell}}}\ldots \mathrm{Q}x_k \! < \! 2^{N^{1/\ell}} \psi^\alpha(a,x_1,x_2, \ldots, x_\ell),\]
where $\psi$ can be decided in deterministic time $N^{1/\ell}$. The aim is to construct $\alpha$ so that this property becomes equivalent to 
\[\as{y_1}{N^4}\es{y_2}{N^4}\ldots\mathrm{Q}y_k \! < \! N^4 \mathrm{\overline Q}y_{k+1} \! < \! N^2\,\alpha(a,y_1,y_2,\ldots,y_k,y_{k+1})\]
for large enough $a$. To achieve this, we assume that inputs to $\alpha$ have the form $\tuple{b,y_1,\ldots,y_k, y_{k+1}}$ for
$y_1, \ldots, y_k < M^4$ and $y_{k+1} < M^2$, divide the inputs into blocks of the form $\tuple{b,y_1,\ldots,y_k, \cdot}$, and iterate restricting $\alpha$ using analogues of Lemma \ref{lem:switching}, now $k$ times instead of $2$ times. For the $i$-th application of the switching lemma, a block contains inputs corresponding to one fixed tuple $\tuple{b,y_1, \ldots, y_{k+1-i}}$ and various possible values of $y_{k+2-i}$, and the requirement on $\delta$ and $\epsilon$ is $4(k+2-i)\delta < \epsilon$. After $k$ rounds of restricting $\alpha$, $\varphi^\alpha(a)$ becomes computable by a decision tree of height $N$ and we can complete the argument as in the case $k=2$. \end{proof}

\section{The switching lemma}\label{sec:switching}

We repeat the statement of Lemma \ref{lem:switching}. For definitions of $N, \rho$ etc.\ refer to the previous section.

\begin{switchinglemma}
Let $0 < \delta, \epsilon \le 1$ such that $12 \delta < \epsilon$. Assume $N$ is large enough and let $\psi$ be an $N^\delta$-DNF. Then with probability at least $1 - 2^{-N^\epsilon}$, $\psi|_{g(\rho)}$ can be decided by a decision tree of height less than
$N^\epsilon$. 
\end{switchinglemma}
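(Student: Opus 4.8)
The plan is to follow Håstad's original method of random restrictions, adapted to the specific distribution $\rho$ (followed by the deterministic completion $g$) defined in Section 4. Recall that $g(\rho)$ differs from a plain $p$-restriction in two ways: first, variables are starred only inside a block that survives as a $*$-block, and within such a block all but one of the starred variables get set to $1$ by $g$; second, the block structure is $\tuple{b,y_1,y_2,\cdot}$ with $M^2$ variables per block. So at the level of $g(\rho)$, each block contributes at most one live variable, and the number of live variables in a block is $0$ or $1$. This is exactly the situation in which Håstad's ``second switching lemma'' applies, and the proof will be a counting/encoding argument of the familiar type: assume $\psi|_{g(\rho)}$ has canonical decision tree of height $\ge s := N^\epsilon$, and show that the set of restrictions with this property can be injectively encoded by (a restriction with a longer starred-free part) together with a small amount of side information, so that its probability is bounded by roughly $(\text{const}\cdot \alpha)^s$ for a suitable $\alpha$ depending on the starring probability, which here is governed by $1/M$ at two levels.

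First I would set up the canonical decision tree $T(\psi|_{g(\rho)})$ in the standard way: process the terms of the DNF $\psi$ in order, find the first term not yet falsified, branch on all its live variables, recurse. If this tree has height $\ge s$, there is a branch $\pi$ of length $\ge s$; truncate it to length exactly $s$. Following Håstad, decompose $\pi$ into consecutive segments corresponding to the successive terms $D_1, D_2, \ldots$ touched along $\pi$, each segment of size $\ell_i \le N^\delta$ (the term width), with $\sum \ell_i$ between $s$ and $s + N^\delta$. The core of the argument is then: given $g(\rho)$ and the branch $\pi$, define $\rho' = g(\rho)\sigma_\pi$ where $\sigma_\pi$ sets the live variables along $\pi$ to the values making each touched term $D_i$ true up to its last touched variable, i.e.\ the ``bad'' assignment. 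The key point is that $\rho'$ assigns (non-$*$) values to the $s$ variables that $g(\rho)$ had left starred along $\pi$, so $\rho'$ has $s$ fewer stars; and from $\rho'$ plus a small ``code'' — for each segment, which variables of $D_i$ were the live ones (a subset of an $N^\delta$-element set) and what the original starred pattern inside the relevant blocks was — one can reconstruct $\rho$ and $\pi$. Because the starring in the $\rho$-distribution happens at probability $\Theta(1/M)$ per variable and $\Theta(1/M)$ per block, trading $s$ stars for fixed values multiplies the probability by roughly $M^{\Theta(s)}$... wait, it \emph{divides} — switching a block from $*$-block to $0$-block is $M$ times more likely — so the probability of a fixed ``bad event along $\pi$'' is smaller than that of $\rho'$ by a factor exponential in $s$, while the number of codes is only $\binom{N^\delta}{\le \ell_i}$-type, i.e.\ at most $(N^\delta)^{O(s)} = 2^{O(\delta s \log N)}$ many, wait more carefully $2^{O(s\log N^\delta)}$; we need the block-trade gain $2^{\Theta(s\log M)}$ to dominate, and since $M$ ranges but the relevant blocks have $M \ge$ (something like $N^{\Theta(\epsilon)}$ or we only need $M$ not too small)... this is exactly where the inequality $12\delta < \epsilon$ enters, forcing the $\log N^\delta$ overhead to be beaten by the $N^\epsilon$ in the exponent, so that the union bound over all length-$s$ branches and all decompositions gives probability at most $2^{-N^\epsilon}$.

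The technical heart, and the main obstacle, is making the encoding injective given the two-level block structure: a starred variable of $g(\rho)$ lives in a $*$-block, and the block itself was chosen to be a $*$-block with probability $1/M$ \emph{and} then the specific variable survives the first-round starring with probability $1/M$, so switching a single live variable to a fixed value must be bookkept as potentially switching the whole block's status, and one has to be careful that distinct $(\rho,\pi)$ pairs do not collapse to the same $(\rho',\text{code})$. I would handle this exactly as in Lemma 6.3 of \cite{hastad:thesis}, carrying along, for each segment, both the set of live coordinates within the term and a bit-string recording which of the at-most-$N^\delta$ variables of that term lay in blocks that $g(\rho)$ had starred versus ones that were already $0/1$; the per-segment side information is then bounded by $2^{O(N^\delta)}$ and, crucially, summed over segments this is $2^{O(N^\delta)\cdot(s/1)}$... no — it is $\prod_i 2^{O(\ell_i)} = 2^{O(\sum \ell_i)} = 2^{O(s + N^\delta)}$, which is absorbed since $N^\delta = o(N^\epsilon)$, wait we also pay $\log$ factors from ``which term'' but those are $O(s\log(\text{number of terms}))$ and the number of terms is at most $2^{O(N^\delta\log N)}$ hmm — this is the spot where one uses that we only need to identify the \emph{next} term at each step relative to $\psi$'s fixed ordering, so no extra cost. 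Once the injection is in place, the probability estimate is a geometric-series computation of exactly Håstad's form, and $12\delta<\epsilon$ is precisely what one needs for the bound $2^{-N^\epsilon}$ to come out with room to spare after the union bound over the $\le 2$ segment-boundary ambiguities per term and the choice of truncated branch.
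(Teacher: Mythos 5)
Your overall strategy (canonical decision tree for the DNF, Razborov/H{\aa}stad-style injective encoding of bad restrictions, geometric-series estimate) is the same as the paper's, but there is a concrete gap exactly at the point you wave at with ``the relevant blocks have $M \ge$ (something like $N^{\Theta(\epsilon)}$ or we only need $M$ not too small)\dots this is exactly where $12\delta<\epsilon$ enters.'' Nothing forces the blocks touched by $\psi$, or by the long branch $\pi$, to be large: $\psi$ is an arbitrary $N^\delta$-DNF over the variables $\alpha(b,y_1,y_2,y_3)$ and may consist entirely of variables from blocks with $M=O(1)$. For such a block the probability of remaining starred after $g(\rho)$ is a constant, so trading a star for a fixed value in the encoding gains only an $O(1)$ factor per queried variable, while the per-variable side information (which of the $\le N^\delta$ variables of the current term, plus the branch bit, etc.) costs $\Theta(\log N)$ bits. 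The code count $(N^\delta)^{\Theta(N^\epsilon)}$ then overwhelms the probability gain, and no choice of the constant in $12\delta<\epsilon$ rescues the union bound. The hypothesis $12\delta<\epsilon$ is not, by itself, what neutralizes small blocks.

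The paper's proof supplies the missing idea explicitly: split blocks into ``small'' ($M<N^{\epsilon/12}$) and ``large'', and have the decision tree begin by brute-force querying \emph{all} starred variables lying in small blocks. This is affordable because the total number of variables in small blocks is at most about $N^{11\epsilon/12}<N^\epsilon/2$, so it costs less than half the height budget. After this prefix $\tau$, every star remaining in play lies in a block with $M\ge N^{\epsilon/12}$, so each conversion of a $*$ into $1$ or of a $*$-block into a $0$-block in the encoding multiplies the probability by at least $N^{\epsilon/12}-1$; it is in the final comparison of $(N^{\epsilon/12}-1)^{-N^\epsilon/2}$ against the roughly $(2N^\delta)^{N^\epsilon/2}\cdot 3^{N^\epsilon/2}\cdot 2^{N^\epsilon/2}\cdot 2^{N^{\delta+\epsilon}/2}$ codes that the inequality $12\delta<\epsilon$ (and the specific constant $12$, via the thresholds $\epsilon/12$ and $11\epsilon/12$) is actually used. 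Your proposal also leaves the ambient probability space slightly off: the relevant randomness is over $\rho$ with the deterministic completion $g(\rho)$, and the encoding must record, per queried block, the position of the unique surviving starred variable; the paper handles this by querying, for each block in $\beta_i$, the one variable left starred by $g(\rho)$ and by keeping the data $\gamma'$ of positively occurring starred variables, which is what makes the map $\rho\mapsto\tuple{\rho\tau\sigma,\tau,\beta',\pi',\gamma'}$ injective. Incorporating the small/large split (and checking the count $N^{11\epsilon/12}$ of small-block variables) is what your argument needs to go through.
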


\begin{proof}
The argument is an essentially standard Razborov-style proof of a switching lemma; our presentation is very strongly inspired by Thapen's note \cite{thapen:switching}. One difference in comparison with the usual setting for switching lemmas is that we simultaneously consider blocks of different sizes, and the probability of assigning a value to a variable/block depends on the size of the block. We 
deal with this by dividing blocks into ``large'' and ``small''; our decision tree will deal with the ``small'' blocks by brute-force search, and with the ``large'' ones in the usual way.

Note that  $\psi$ contains only variables from finitely many blocks. For this reason, we may think of our restrictions $\rho$ as defined on finitely many blocks, so that individual restrictions have non-zero probabilities.

Consider the tree $T(\psi, \rho)$ defined as follows. $T(\psi, \rho)$ first queries all the $\le N^{\frac{11}{12}\epsilon} < N^\epsilon/{2}$ variables from blocks $\tuple{b, y_1, y_2, \cdot}$ with $M < N^{\epsilon/12}$ that are starred in $\rho$. 
Let $\tau$ be the assignment to those variables given by a specific branch of the tree. If $\tau$ is inconsistent with $g(\rho)$, the branch ends immediately in a leaf labelled (for example) $0$. Otherwise, the subtree of $T(\psi, \rho)$ below $\tau$, say $T^\tau(\psi, \rho)$, is defined by the following process. Let $C_1$ be the first conjunction in $\psi$ such that $C_1|_{\rho\tau} \not \equiv 0$. Let $\beta_1$ be the list of blocks which contain a starred variable appearing in $C_1|_{\rho\tau}$. For each such block 
$\tuple{b, y_1, y_2, \cdot}$, the tree queries $\alpha_{b, y_1, y_2, y_3}$ for the unique $y_3$ for which this is starred in $g(\rho)$ (even if this is not the variable appearing in  $C_1|_{\rho\tau}$). Let $\pi_1$ be the assignment to the blocks in $\beta_1$ given by $g(\rho)$ together with the answers to the queries made by the tree. If $C_1|_{\rho\tau\pi_1} \equiv 1$, the branch ends in a leaf labelled $1$. If not, let $C_2$ be the first conjunction in $\psi$ such that $C_1|_{\rho\tau} \not \equiv 0$, let $\beta_2$ consist of all blocks containing a starred variable appearing in $C_2|_{\rho\tau\pi_1}$ and continue as previously. If at any point
$C_i|_{\rho\tau\pi_1\ldots\pi_i} \equiv 1$, the branch ends in a leaf labelled $1$. Otherwise, at some point all cojunctions in $\psi$ have been falsified and the tree ends in a leaf labelled $0$. By construction, $T(\psi, \rho)$ correctly determines the truth value of $\psi|_{g(\rho)}$.

Let $S$ be the set of the $\rho$ such that there exists $\tau$ for which $T^\tau(\psi, \rho)$ has height at least 
$N^\epsilon/2$. To prove  the lemma, it suffices to show
\[\Pr_\rho[\rho \in S ] \le \frac{1}{2^{N^\epsilon}}.\]

Let $\rho$ be in $S$, let $\tau$ be the first assignment such that $T^\tau(\psi, \rho)$ has height at least $N^\epsilon/2$, and let $\pi$ be the first branch in $T^\tau(\psi, \rho)$ of length at least $N^\epsilon/2$. Let $C_1, \ldots, C_k$, $\beta_1, \ldots, \beta_k$, $\pi_1,\ldots,\pi_k$ be the conjunctions, sets of  blocks, and assignments encountered in the first $N^\epsilon/2$ queries in $\pi$. 
For $i =1, \ldots, k$, let $\gamma_i$ be the set of the starred variables from blocks in $\beta_i$ appearing positively in $C_i$.

For $i = 1, \ldots, k$, the assignment $\sigma_i$ is defined as follows. $\sigma_1$ sets each variable appearing in $\gamma_i$ to $1$ and sets all other starred variables from blocks in $\beta_i$ to $0$. Note that $\sigma_i$ sets exactly the same variables as $\pi_i$, so that the $\sigma_i$'s have pairwise disjoint domains and thus $\sigma = \sigma_1\ldots\sigma_k$ is a well-defined restriction. Note also that $C_i|_{\rho\tau\pi_1\ldots\pi_{i-1}\sigma_i} \not\equiv 0$ (in fact, $C_i|_{\rho\tau\pi_1\ldots\pi_{i-1}\sigma_i} \equiv 1$ for all $i  = 1, \ldots, k-1$).

We can represent $\tau$ by a number below $3^{N^\epsilon/2}$ (each of $N^\epsilon/2$ variables can be set to $1$, $0$ or already evaluated in $\rho$). We code $\tuple{\beta_1, \ldots, \beta_k}$ as a tuple $\beta' = \tuple{\beta'_1, \ldots, \beta'_k}$, where for each block in $\beta_i$, $\beta'_i$ contains the information which of the $N^\delta$ variables in $C_i$ belong to the block and an additional bit to determine whether the next block listed in $\beta'$ is still in $\beta_i$ or already in $\beta_{i+1}$. Since there are at most $N^\epsilon/2$ blocks in all the $\beta_i$'s taken together, $\beta'$ can be represented by a number below $(2N^\delta)^{N^\epsilon/2}$. We code $\tuple{\pi_1, \ldots, \pi_k}$ as a number $\pi'$ below $2^{N^\epsilon/2}$ (a string of $N^\epsilon/2$ bits listing the answers to queries given along $\pi$). Finally, we code $\tuple{\gamma_1, \ldots, \gamma_k}$ as a tuple 
$\gamma' = \tuple{\gamma'_1, \ldots, \gamma'_k}$, where each $\gamma'_i$ contains the information which of the $N^\delta$ variables in $C_i$ comprise $\gamma_i$. Since there are at most $N^\epsilon/2$ $C_i$'s, $\gamma'$ can be represented by a number below $2^{N^{\delta+\epsilon}/2}$.

We claim that the mapping:
\[S \ni \rho \mapsto \tuple{\rho\tau\sigma, \tau, \beta', \pi', \gamma'} \]
is an injection. To see this, note that given $\tuple{\rho\tau\sigma, \tau, \beta', \pi', \gamma'}$, we can find
$C_1$ as the first conjunction $C$ in $\psi$ such that $C|_{\rho\tau\sigma}\neq 0$. Knowing $C_1$, we can decode $\beta'_1$ and $\gamma'_1$ to find $\beta_1, \gamma_1$. We can now identify $\sigma_1$ as the assignment which sets all variables in $\gamma_1$ to $1$ and sets all the variables from blocks in $\beta_1$ which are set to $0$ by $\rho\tau\sigma$. Knowing $\sigma_1$, $\gamma_1$ and $\pi'_1$, we can determine $\pi_1$, identify $C_2$ as the first conjunction $C$ in $\psi$ such that $C|_{\rho\tau\pi_1\sigma_2\ldots\sigma_k}\neq 0$, identify $\beta_2, \gamma_2, \sigma_2, \pi_2$, and so on. Eventually, we are able to recover all of $\sigma$ and therefore also $\rho\tau$. Since we are also given $\tau$, this is enough to recover $\rho$.

For fixed values of $\tau, \beta', \pi', \gamma'$, let $S_{\tau, \beta', \pi', \gamma'}$ consist of those $\rho \in S$ to which these specific values are assigned. By the previous paragraph, the mapping $\rho \mapsto \rho\tau\sigma$ restricted to $S_{\tau, \beta', \pi', \gamma}$ is an injection. For $\rho \in S_{\tau, \beta', \pi', \gamma}$, the restriction $\rho \tau$ is obtained from $\rho$ by turning some $*$'s into $1$'s and some $*$-blocks into $0$-blocks, so certainly the probability of $\rho\tau$ is no smaller than that of $\rho$. Going from $\rho\tau$ to $\rho\tau \sigma$ changes $N^\epsilon/2$ blocks which were previously $*$-blocks into $0$-blocks, and changes some $*$'s into $1$'s. Since all variables from blocks with $M  < N^{\epsilon/12}$ are assigned $0/1$ values in $\rho\tau$, each change of a $*$-block into a $0$-block or of a $*$ into $1$ increases the probability of a restriction by a factor of at least 
\[\left.\left(1 - \frac{1}{N^{\epsilon/12}}\right)\middle/\left( \frac{1}{N^{\epsilon/12}} \right) \right. = N^{\epsilon/12} - 1.\] 
If we write $\ell$ to denote of $*$'s changed into $1$'s, then $\ell$ equals the number of positively appearing variables listed in $\gamma'$, so $0 \le \ell \le  N^{\delta + \epsilon}/2$. The ratio $\Pr(\rho \tau \sigma)/\Pr(\rho)$ is at least
\[\left(N^{\epsilon/12} - 1\right)^{{N^\epsilon}/{2} + \ell}\]
and the probabilities of the $\rho\tau\sigma$'s cannot add up to more than 1, so
\[\Pr_\rho[\rho \in S_{\tau, \beta', \pi', \gamma}] \le \left(N^{\epsilon/12} - 1\right)^{-{N^\epsilon}/{2} - \ell}.\]

For fixed $\tau, \beta', \pi'$, write $S_{\tau, \beta', \pi'}$ for $\bigcup_{\gamma'} S_{\tau, \beta', \pi', \gamma}$. We have
\begin{align*}
\Pr_\rho [\rho \in S_{\tau, \beta', \pi'}] = \sum_{\gamma'}\Pr_\rho [\rho \in S_{\tau, \beta', \pi', \gamma'}] = \\
= \sum_{\ell = 0}^{N^{\delta+\epsilon}/2}\sum_{|\gamma'|=\ell}\Pr_\rho [\rho \in S_{\tau, \beta', \pi', \gamma'}] \le \\
 \le \sum_{\ell = 0}^{N^{\delta+\epsilon}/2}\binom{N^{\delta+\epsilon}/2}{\ell}\left(N^{\epsilon/12} - 1\right)^{-{N^\epsilon}/{2} - \ell} = \\ 
= \left(N^{\epsilon/12} - 1\right)^{-{N^\epsilon}/{2}}\left(\left. N^{\epsilon/12} \middle/ \left( N^{\epsilon/12} - 1\right) \right.\right)^{N^{\delta+\epsilon}/2}\le \\
\le  \left(N^{\epsilon/12} - 1\right)^{-{N^\epsilon}/{2}} e^{N^\epsilon/2}.
\end{align*}
The last inequality holds for sufficiently large $N$ by the assumption that $12\delta< \epsilon$. Now, $S = \bigcup_{{\tau, \beta', \pi'}}S_{\tau, \beta', \pi'}$, so by the union bound
\[\Pr_\rho[\rho \in S] \le \left(12eN^\delta\right)^{N^\epsilon/2} \left(N^{\epsilon/12} - 1\right)^{-{N^\epsilon}/{2}},\]
which is smaller than $2^{-N^\epsilon}$ for large enough $N$.
\end{proof}

\section{Concluding remarks}

We have formulated a complexity-theoretic statement that might be very hard to disprove and would imply that $\bsi$ is provable from $\neg \Exp$.  It would be interesting to give evidence for the hardness of disproving the statement (\ref{eqn:assumption}), for example by constructing a relativized world in which (\ref{eqn:assumption}) holds; this would also lead to a proof of our conjecture from Section 4. On the other hand, disproving (\ref{eqn:assumption}) would be no less interesting, and could lead to some progress towards a proof that $\ido + \Exp \not\vdash \bsi$.

The question concerning provability of $\bsi$ is just one of many classical questions about weak arithmetic which do not explicitly mention complexity theory but seem to have it lurking in the background. One other example is the problem whether every model of $\ido + \bsi$ has a proper end-extension to a model of $\ido$. It could be interesting to find, on the one hand, ``plausible'' complexity-theoretic statements implying the ``expected'' answers to these questions, and on the other hand, ``plausibly hard to disprove'' statements implying the ``unexpected'' answers. The former task, in fact, not yet fully carried out even in the case of the question studied in this paper. The best known complexity-theoretic statement implying $\ido + \Exp \not\vdash \bsi$, namely $\ph{\downarrow}$, is usually conjectured to be false, while other such statements are more obscure and seem hard to make any conjecture about.

\end{document}